\def\doi#1{   {\href{http://dx.doi.org/#1}
   {{\mdseries\ttfamily DOI}}}}
\def\p{\partial}
\def\b{\beta}
\newcommand{\al}{\alpha}    
\newcommand{\de}{\delta}    \newcommand{\De}{\Delta}
  \newcommand{\ep}{\varepsilon}
    \newcommand{\la}{\lambda}
\newcommand{\ga}{\gamma}    
\newcommand{\R}{\mathbb{R}}
\newcommand{\N}{\mathbb{N}}
\newcommand{\ti}{\tilde }
\newcommand{\beeq}{\begin{equation}}\newcommand{\eneq}{\end{equation}}
\newcommand{\Rmnum}[1]{\uppercase \expandafter{\romannumeral #1}}
\newcommand{\supp}{\text{supp}}
\newcommand{\exponent}{e^{it(-\Delta)^{\frac{a}{2}}}}
\newcommand{\norm}[1]{\|#1\|}
\newcommand{\rate}{R_\de^a}
\def \endprf{\hfill  {\vrule height6pt width6pt depth0pt}\medskip}
\def\<{\langle}             \def\>{\rangle}
\def\({\left(}                 \def\){\right)}
\numberwithin{equation}{section}
\newtheorem{thm}{Theorem}[section]
 \newtheorem{lem}[thm]{Lemma}
 \newtheorem{prop}[thm]{Proposition}
 \newtheorem{rem}[thm]{Remark}
\title[sharp convergence rate]
{sharp convergence rate on Schr\"{o}dinger type operators}
\author{Meng Wang}
\address{School of Mathematical Sciences\\ Zhejiang University\\Hangzhou 310058, P. R. China}\email{mathdreamcn@zju.edu.cn }
\author{Shuijiang Zhao$^{*}$}\thanks{* Corresponding author}
\address{School of Mathematical Sciences\\ Zhejiang University\\Hangzhou 310058, P. R. China}\email{zhaoshuijiang@zju.edu.cn }
\keywords{convergence rate, maximal estimate, {S}chr\"{o}dinger type operator}
\subjclass[2010]{42B25}
\begin{document}
\bibliographystyle{plain}

\maketitle

\begin{abstract}
For Schr\"{o}dinger type operators in one dimension, we consider the relationship between the convergence rate and the regularity for initial data. By establishing the associated frequency-localized maximal estimates, we prove sharp results up to the endpoints. The optimal range for the wave operator in all dimensions is also obtained.
\end{abstract}

\section{Introduction}
For $ a> 0 $, the solution of the fractional Schr\"{o}dinger equation
\begin{equation*}
	i\p_tu+(-\De)^{\frac{a}{2}}u=0,\quad u(x,0)=f(x)\in H^s(\R^n)
\end{equation*}
can be formally written as the Schr\"{o}dinger type operator
\begin{equation*}
S^af(x,t)=\exponent f(x)=\frac{1}{(2\pi)^n}\int_{\R^n}e^{i(x\xi+t|\xi|^a)}\hat{f}(\xi)d\xi.
\end{equation*}
Here, $ H^s(\R^n) $ is  the inhomogeneous Sobolev space of order $ s $. When $ a=2 $, $ S^2 $ is the standard Schr\"{o}dinger operator.
\par Carleson \cite{Carleson} initially proposed the problem of determining the optimal regularity $ s $ such that
\begin{equation}
	\lim_{t\to 0}S^af(x,t)=f(x),\quad  a.e.\quad x\in\R^n\label{point}
\end{equation}
holds for all $ f\in H^s(\R^n) $ when $ a=2 $. He proved that the almost everywhere pointwise convergence holds for  $ s\geq \frac{1}{4} $  when $ n=1 $ and $ a=2 $, which is also necessary as shown by Dahlberg-Kenig \cite{1981DK}. For the standard Schr\"{o}dinger operator $ S^2 $ in higher dimensions, this problem is much more complicated. Du-Guth-Li \cite{DuGuthLi} and Du-Zhang \cite{DuZhang}  showed that (\ref{point}) holds for $ s>\frac{n}{2n+2} $ when $ n\geq 2$ and $ a=2 $, which is sharp up to the endpoints according to the counterexamples by Bourgain \cite{Bourgain16} and Luc\`a-Rogers \cite{LucaRogers19}. See \cite{sjolin1987regularity, Vega1988, Bourgain1995, 2000TV2, lee2006, Bourgain2013} and references therein for more previous results related to this problem.
\par For the case $ a>1 $, Sj\"{o}lin \cite{sjolin1987regularity} showed that (\ref{point}) holds for all $ f\in H^s(\R) $ if and only if $ s\geq \frac{1}{4} $ when $ n=1 $. Miao-Yang-Zheng \cite{MiaoYangZheng2015} and Cho-Ko \cite{ChoKo2022} obtained partial results in higher dimensions when $ a>1 $. For the case $ 0<a<1 $, it was shown by Walther \cite{1995walther} that the critical regularity in one spatial dimension is $ \frac{a}{4} $. However, the endpoint problem $ s=\frac{a}{4} $ remains open. Related results in higher dimensions can be found in \cite{Zhangchunjie}. For the case $ a=1 $, it is well known that, in any dimension,  (\ref{point}) holds  if and only if $ s>\frac{1}{2} $. See \cite{1999Walther} and \cite{2023WZ} for more details.
\par A natural generalization of the pointwise convergence problem is to  consider the convergence rate of $ S^a f$ as $t\to 0 $. More specifically, we are interested in  the sharp range of $ \de $ such that
\begin{equation}
S^a f(x,t)-f(x)=o(|t|^\de),\quad  a.e.\quad x\in\R^n   \quad \text{as} \quad t\to 0.\label{convergence rate}
\end{equation}
holds whenever $ f\in H^s(\R^n) $. Since  critical values of regularity are unknown in higher dimensions for general $ a>0 $, we focus on  the case  $ n=1 $ in this paper. 
\par By a slight modification of \cite[Proposition 2.1]{CFW2018rate}, it is clear that $\de<1 $ is necessary if (\ref{convergence rate}) holds. To the best of the authors' knowledge, the earliest results regarding this problem were established by Cowling \cite{harmonicsemigroup,cowling1983pointwise}, using harmonic analysis on semigroups.

\begin{thm}[\cite{cowling1983pointwise}, p.85]\label{thm1Cowling}
Let $ H $ be a self-adjoint operator on $ L^2(\R^n) $, and let $ \de \in (\frac{1}{2},1) $. Suppose that $ f\in L^2(\R^n) $, and $ |H|^\de f\in L^2(\R^n) $. Let $ M_\de f $ be defined by the formula
$$  M_\de f(\cdot)=\sup\{|t|^{-\de}|e^{itH}f-f|:t\in \R\} .$$
Then $ M_\de f $ is in $ L^2(\R^n) $, and 
\begin{equation*}
\norm{M_\de f}_{L^2(\R^n)}\leq C(\de)\norm{ |H|^\de f}_{L^2(\R^n)}.
\end{equation*}
\end{thm}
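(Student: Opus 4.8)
The plan is to recast the bound as a maximal multiplier estimate for $H$ and then to run a Sobolev embedding in the \emph{multiplicative} time variable, a device in the spirit of Cowling's harmonic analysis on semigroups.

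\emph{Step 1: reduction to a maximal multiplier.} Put $F:=|H|^\de f\in L^2(\R^n)$. Since the symbols agree ($\vp(t\la)\,|\la|^\de=|t|^{-\de}(e^{it\la}-1)$, including at $\la=0$ where both vanish), the spectral theorem gives $|t|^{-\de}(e^{itH}f-f)=\vp(tH)F$, where $\vp(u):=(e^{iu}-1)/|u|^\de$ for $u\neq0$ and $\vp(0):=0$; note $\vp$ is bounded and continuous on $\R$ because $\de<1$. Hence it is enough to prove
\[
\Big\|\sup_{t\in\R}|\vp(tH)F|\Big\|_{L^2(\R^n)}\le C(\de)\,\norm{F}_{L^2(\R^n)},
\]
and, the case $t<0$ being the same argument with $\vp$ replaced by $\overline\vp$, I would treat $t>0$ only. (As usual the supremum is understood over a countable dense set of times, using that $t\mapsto\vp(tH)F$ is $L^2$-continuous with a jointly measurable representative; by density one may also first assume the spectral measure of $F$ is compactly supported away from $0$, which legitimizes the Fubini steps below, and remove this at the end by a Fatou argument.)

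\emph{Step 2: the dilation group.} Write $w(x,s):=\vp(e^sH)F(x)$, so that $\sup_{t>0}|\vp(tH)F(x)|=\sup_{s\in\R}|w(x,s)|$. Fix $\theta\in(\tfrac12,1)$. The one-dimensional embedding $H^\theta(\R)\hookrightarrow L^\infty(\R)$ gives $\sup_s|w(x,s)|^2\lesssim_\theta\norm{w(x,\cd)}_{H^\theta(\R_s)}^2$; integrating in $x$ and using Plancherel in $s$,
\[
\Big\|\sup_{t>0}|\vp(tH)F|\Big\|_{L^2(\R^n)}^2\ \lesssim_\theta\ \int_\R\<\si\>^{2\theta}\,\norm{\widehat{w}(\cd,\si)}_{L^2(\R^n)}^2\,d\si ,
\]
with $\widehat{w}$ the Fourier transform in $s$. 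Spectrally, $\widehat{w}(\cd,\si)=\Psi_\si(H)F$ where $\Psi_\si(\la)=\int_\R\vp(e^s\la)e^{-is\si}\,ds$, and the substitution $u=e^s|\la|$ shows $|\Psi_\si(\la)|$ equals $|\mathcal M\vp(\si)|$ or $|\mathcal M\vp(-\si)|$ according to $\mathrm{sgn}\,\la$, where $\mathcal M\vp(\si)=\int_0^\infty\vp(u)u^{-i\si}\,\tfrac{du}{u}$ is the Mellin transform of $\vp$. In particular $|\Psi_\si(\la)|$ is independent of $|\la|$, so by the spectral theorem $\norm{\widehat{w}(\cd,\si)}_{L^2}^2\le\big(|\mathcal M\vp(\si)|+|\mathcal M\vp(-\si)|\big)^2\norm{F}_{L^2}^2$, and therefore
\[
\Big\|\sup_{t>0}|\vp(tH)F|\Big\|_{L^2(\R^n)}^2\ \lesssim_\theta\ \norm{\vp\circ\exp}_{H^\theta(\R)}^2\,\norm{F}_{L^2(\R^n)}^2 .
\]

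\emph{Step 3: membership $\vp\circ\exp\in H^\theta$, and the role of $\de>\tfrac12$.} The remaining task — and, I expect, the main obstacle — is to choose $\theta\in(\tfrac12,1)$ with $\vp\circ\exp\in H^\theta(\R)$. The function $r\mapsto\vp(e^r)=(e^{ie^r}-1)e^{-\de r}$ belongs to $L^2(\R)$ for every $\de\in(0,1)$, being $O(e^{(1-\de)r})$ as $r\to-\infty$ and $O(e^{-\de r})$ as $r\to+\infty$; but its regularity is delicate, since near a scale $r$ it is a chirp of frequency $\sim e^{r}$ and amplitude $\sim e^{-\de r}$, so each unit logarithmic scale contributes $\sim e^{2(\theta-\de)r}$ to the homogeneous $\dot H^\theta$ energy, and summability as $r\to+\infty$ forces $\theta<\de$. (Equivalently $\mathcal M\vp(\si)=e^{-i\pi\de/2}e^{\pi\si/2}\Gamma(-\de-i\si)$, which by Stirling's formula decays like $\<\si\>^{-\de-1/2}$, so $\int\<\si\>^{2\theta}|\mathcal M\vp(\si)|^2\,d\si<\infty$ precisely when $\theta<\de$.) Since $\de>\tfrac12$, one may take $\theta\in(\tfrac12,\de)$, which closes the estimate; $C(\de)$ is then governed by $\norm{\vp\circ\exp}_{H^\theta}$ and the embedding constant, both of which degenerate as $\de\downarrow\tfrac12$, matching the failure of the statement at the endpoint. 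This is exactly where $\de>\tfrac12$ — as opposed to merely $\de>0$, which already suffices for $M_\de f$ to be finite a.e. — is needed: a direct fundamental-theorem-of-calculus argument in $t$, costing one full derivative (a factor $|t|\,|H|$), diverges on $\{|tH|\gg1\}$ for any $\de<1$, and it is the logarithmic substitution that converts the problem into one resolvable with only ``half a derivative''.
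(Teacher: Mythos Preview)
The paper does not prove this statement: Theorem~\ref{thm1Cowling} is quoted verbatim from Cowling \cite{cowling1983pointwise} and used as a black box (for instance in the proof of \eqref{lowfrequency}), so there is no in-paper argument to compare against.

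Your argument is correct and is, in fact, essentially Cowling's own proof. The three steps --- rewriting $|t|^{-\de}(e^{itH}f-f)$ as $\vp(tH)|H|^\de f$ with $\vp(u)=(e^{iu}-1)/|u|^\de$, passing to the multiplicative variable $t=e^s$ and applying the one-dimensional Sobolev embedding $H^\theta(\R_s)\hookrightarrow L^\infty(\R_s)$ for $\theta\in(\tfrac12,1)$, and reducing the resulting weighted $L^2$ condition to the Mellin transform estimate $|\mathcal M\vp(\si)|\sim\langle\si\rangle^{-\de-1/2}$ --- are exactly the mechanism of \cite{harmonicsemigroup,cowling1983pointwise}. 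Your identification $\mathcal M\vp(\si)=e^{-i\pi\de/2}e^{\pi\si/2}\Gamma(-\de-i\si)$ is correct (one integration by parts moves the contour-rotation identity $\int_0^\infty e^{iu}u^{z-1}\,du=e^{i\pi z/2}\Gamma(z)$ from the strip $0<\mathrm{Re}\,z<1$ to $-1<\mathrm{Re}\,z<0$), and Stirling then gives the claimed $\langle\si\rangle^{-\de-1/2}$ decay, forcing $\theta<\de$ and explaining precisely why $\de>\tfrac12$ is needed. The measurability and Fubini caveats you flag are handled the standard way you indicate.
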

Applying Theorem \ref{thm1Cowling} with $ H=(-\Delta)^{\frac{a}{2}} $, by a standard argument (see, e.g., \cite[Theorem 5]{sjolin1987regularity}), we obtain 
	\begin{equation}
	\lim_{t\to 0}\rate f(x,t)=0,\quad a.e.\quad x\in \R^n, \label{equivalence}
	\end{equation}
	holds for all $ f\in H^s(\R^n) $ when $ s=a\de $ and $ \frac{1}{2}<\de<1 $. Here and in what follows we use the notation
	\begin{equation*}
	\rate f(x,t)=\frac{\exponent f(x)-f(x)}{|t|^\de}. 
	\end{equation*}
	Since (\ref{equivalence}) is equivalent to (\ref{convergence rate}), we obtain the convergence rate $ \de $ for all $ f\in H^{a\de}(\R^n) $ in any dimension  when $ \frac{1}{2}<\de<1 $.
\par Cao-Fan-Wang \cite{CFW2018rate} obtained partial results for the case $ a>1 $ in one spatial dimension when $ \frac{1}{4}\leq s\leq \frac{a}{2} $.
\begin{thm}[\cite{CFW2018rate}, Theorem 1.2]\label{thm2CFM}
Let $ n=1, a>1, 0\leq \de <1 $,  and assume $ a\de+\frac{1}{4}\leq s $. Then (\ref{convergence rate}) holds for all $ f\in H^s(\R) $.
\end{thm}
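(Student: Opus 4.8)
The plan is to derive Theorem~\ref{thm2CFM} from one frequency-localized maximal estimate, fed into the standard approximation scheme. Since (\ref{convergence rate}) is equivalent to (\ref{equivalence}) and $\norm{\cdot}_{H^{a\de+1/4}}\leq\norm{\cdot}_{H^s}$ whenever $s\geq a\de+1/4$, it is enough to take $s=a\de+1/4$ and to prove the local maximal bound
\begin{equation}\label{prop-goal}
	\Big\|\sup_{0<|t|<1}|\rate f(\cdot,t)|\Big\|_{L^2([-1,1])}\les\norm{f}_{H^{a\de+1/4}(\R)}.
\end{equation}
Granting \eqref{prop-goal}: given $\ep>0$, pick a Schwartz function $g$ with $\norm{f-g}_{H^{a\de+1/4}}<\ep$; as $\de<1$ and $|\xi|^a\widehat g\in L^1$, one has $|\rate g(x,t)|\leq|t|^{1-\de}\norm{|\xi|^a\widehat g}_{L^1}\to 0$ pointwise as $t\to 0$, so $\limsup_{t\to 0}|\rate f(x,t)|\leq\sup_{0<|t|<1}|\rate(f-g)(x,t)|$; applying \eqref{prop-goal} to $f-g$ and letting $\ep\to 0$ forces the left side to vanish a.e.\ on $[-1,1]$, and translating the interval yields (\ref{convergence rate}) a.e.\ on $\R$.

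For \eqref{prop-goal} I would use a Littlewood--Paley decomposition $f=P_{\leq 1}f+\sum_{j\geq 1}f_j$ with $\widehat{f_j}$ supported in $|\xi|\sim 2^j$. The low piece is harmless, since $|t|^{-\de}|e^{it|\xi|^a}-1|\leq|t|^{1-\de}|\xi|^a\leq 1$ on $|\xi|\leq 1$, $|t|\leq 1$, whence $\big\|\sup_{0<|t|<1}|\rate(P_{\leq 1}f)(\cdot,t)|\big\|_{L^2([-1,1])}\les\norm{\widehat{P_{\leq 1}f}}_{L^1}\les\norm{f}_{L^2}$. For the high pieces the target is the uniform estimate
\begin{equation}\label{freq-goal}
	\Big\|\sup_{0<|t|<1}|\rate f_j(\cdot,t)|\Big\|_{L^2([-1,1])}\les 2^{j(a\de+1/4)}\norm{f_j}_{L^2(\R)},
\end{equation}
which I would obtain by cutting the time range at the natural scale $|t|\sim 2^{-ja}$. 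For $0<|t|\leq 2^{-ja}$, the identity $S^af_j(\cdot,t)-f_j=i\int_0^t S^a[(-\Delta)^{a/2}f_j](\cdot,s)\,ds$ gives $|\rate f_j(x,t)|\leq|t|^{1-\de}\sup_{|s|\leq 2^{-ja}}|S^a[(-\Delta)^{a/2}f_j](x,s)|\leq 2^{-ja(1-\de)}\sup_{|s|\leq 1}|S^a[(-\Delta)^{a/2}f_j](x,s)|$; invoking the classical local maximal estimate $\big\|\sup_{|s|<1}|S^ah_j|\big\|_{L^2([-1,1])}\les 2^{j/4}\norm{h_j}_{L^2}$ for $a>1$ (Sj\"olin \cite{sjolin1987regularity}) with $h_j=(-\Delta)^{a/2}f_j$, whose $L^2$ norm is $\sim 2^{ja}\norm{f_j}_{L^2}$, this regime contributes $\les 2^{-ja(1-\de)}2^{j/4}2^{ja}\norm{f_j}_{L^2}=2^{j(a\de+1/4)}\norm{f_j}_{L^2}$. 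For $2^{-ja}<|t|<1$, one uses $|t|^{-\de}<2^{ja\de}$ to bound $|\rate f_j(x,t)|\leq 2^{ja\de}\big(|S^af_j(x,t)|+|f_j(x)|\big)$, and the same local maximal estimate controls the $L^2([-1,1])$ norm of the supremum by $2^{ja\de}(2^{j/4}+1)\norm{f_j}_{L^2}\les 2^{j(a\de+1/4)}\norm{f_j}_{L^2}$. This is precisely where the hypothesis $a>1$ is used.

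It remains to sum \eqref{freq-goal} in $j$. For any $\sigma>a\de+1/4$ the triangle inequality for the supremum and Cauchy--Schwarz give $\big\|\sup_t|\rate f|\big\|_{L^2([-1,1])}\leq\sum_j\big\|\sup_t|\rate f_j|\big\|_{L^2([-1,1])}\les\sum_j 2^{j(a\de+1/4)}\norm{f_j}_{L^2}\les\norm{f}_{H^\sigma}$, which already proves (\ref{convergence rate}) on the \emph{open} range $s>a\de+1/4$. I expect the endpoint $s=a\de+1/4$ to be the main obstacle: estimate \eqref{freq-goal} is saturated at $|t|\sim 2^{-ja}$ with no room to spare, so the $\ell^1$-summation over frequencies is genuinely lossy and cannot be absorbed into $\norm{f}_{H^{a\de+1/4}}$. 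To reach the endpoint I would instead run a $TT^*$ argument directly on the full localized operator: linearizing $\sup_t$ through a measurable $t:\R\to(0,1)$, the square of the left side of \eqref{prop-goal} becomes a bilinear form whose kernel on $[-1,1]\times[-1,1]$ is $K(x,y)=\int_{\R}e^{i(x-y)\xi}\,m_{t(x)}(\xi)\,\overline{m_{t(y)}(\xi)}\,\langle\xi\rangle^{-1/2}\,d\xi$ with $m_t(\xi)=(e^{it|\xi|^a}-1)/(|t|^\de\langle\xi\rangle^{a\de})$ bounded uniformly in $t$ and $\xi$; after a dyadic frequency decomposition and a further split according to whether $|t(x)|\,|\xi|^a$ and $|t(y)|\,|\xi|^a$ exceed $1$, the oscillation $e^{\pm it|\xi|^a}$ can be moved into the phase, leaving amplitudes that are order-zero symbols and phases of the form $(x-y)\xi+c\,|\xi|^a$ with $|c|\leq 1$. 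Stationary- and non-stationary-phase estimates then give $|K(x,y)|\les|x-y|^{-1/2}$ on $[-1,1]^2$ (here $a>1$ is used again, to sum the dyadic contributions), after which Schur's test on the bounded interval closes \eqref{prop-goal}. This kernel bound is the $a>1$ analogue of the estimate underlying Carleson's theorem, and verifying it uniformly in the linearizing function $t$ --- in particular its behavior as $t\to 0$ --- is where I expect the real work to lie.
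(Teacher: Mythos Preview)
This theorem is quoted from \cite{CFW2018rate} and not proved in the present paper, so there is no in-paper proof to compare against directly. The relevant benchmark here is the paper's proof of the sharper Theorem~\ref{thm3}, specifically Proposition~\ref{freqlocalized}.

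Your argument for the open range $s>a\de+\tfrac14$ is correct and is a coarsened version of the paper's frequency-localized scheme: you split time at $|t|\sim 2^{-ja}$ and feed both halves into Sj\"olin's local maximal estimate (Lemma~\ref{sharp local lemma}), obtaining the exponent $a\de+\tfrac14$ at each frequency $2^j$. The paper instead splits time into three regions $(0,2^{-ak}]$, $(2^{-ak},2^{(1-a)k})$, $[2^{(1-a)k},1)$ and, on the middle region, applies the Dimou--Seeger estimate (Lemma~\ref{dimou}) dyadically in $t$ to reach the sharper exponent $(a-1)\de+\max\{\de,\tfrac14\}$. Your two-region split cannot see this improvement because on $2^{-ja}<|t|<1$ you simply bound $|t|^{-\de}$ by its worst value $2^{ja\de}$; the finer split is what buys the extra $\min\{\de,\tfrac14\}$.

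The endpoint $s=a\de+\tfrac14$ is, as you yourself flag, where the content lies, and what you have written for it is a plan rather than a proof. The claimed kernel bound $|K(x,y)|\les|x-y|^{-1/2}$, uniform in the measurable linearization $t(\cdot)$, is precisely the crux, and your sketch (``move the oscillation into the phase, order-zero amplitudes, stationary/non-stationary phase, then sum'') does not carry it out: after the dyadic frequency split and the further split according to $|t(x)|\,|\xi|^a\gtrless 1$ and $|t(y)|\,|\xi|^a\gtrless 1$, one must sum the resulting stationary-phase contributions over shells \emph{uniformly} in $t(x),t(y)\in(0,1)$, and it is exactly this uniformity (together with the behavior of the amplitude $m_t$ near the transition $|t||\xi|^a\sim 1$) that is delicate. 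Until that step is actually executed, your proposal establishes only the strict range $s>a\de+\tfrac14$, which is already subsumed by---indeed strictly weaker than---the paper's Theorem~\ref{thm3}.
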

 When $ n=1 $, we extend the above results to the sharp range for the case $ a>1 $.
 \begin{thm}\label{thm3}
 	Let $ n=1$, $a>1$, $0\leq \de < 1 $,  and assume 
 	\begin{equation}
 		(a-1)\de+\max\{\de,\frac{1}{4}\}<s.
 	\end{equation}
 	Then (\ref{convergence rate}) holds for all $ f\in H^s(\R) $. Conversely,  if (\ref{convergence rate}) holds for all $ f\in H^s(\R) $, then $ (a-1)\de+\max\{\de,\frac{1}{4}\}\leq s $.
 \end{thm}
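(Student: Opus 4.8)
The plan is to run the Sj\"olin scheme. Since $S^af(x,t)-f(x)=O(|t|)=o(|t|^\de)$ as $t\to0$ for every $x$ when $f$ is Schwartz, \eqref{convergence rate} holds on a dense subspace of $H^s(\R)$, and it then follows for all $f\in H^s(\R)$ once we have the maximal estimate
\[
\Big\|\sup_{0<|t|\le1}|\rate f(x,t)|\Big\|_{L^2(B(0,1))}\lesssim\|f\|_{H^s(\R)},\qquad s>(a-1)\de+\max\{\de,\tfrac14\}
\]
(dilations reduce us to the unit ball). Decomposing $f=\sum_{k\ge0}P_kf$ into Littlewood--Paley pieces, using $\sup_t|\rate f|\le\sum_k\sup_t|\rate P_kf|$ and Cauchy--Schwarz in $k$, it suffices to prove, for $\hat f$ supported in $\{|\xi|\sim\la:=2^k\}$ with $\la\ge1$,
\[
\Big\|\sup_{0<|t|\le1}|\rate f(x,t)|\Big\|_{L^2(B(0,1))}\lesssim(\log\la)^{C}\,\la^{(a-1)\de+\max\{\de,1/4\}}\,\|f\|_{L^2(\R)},
\]
since the power of $\la$ is strictly below $s$ and the logarithm is summable in $k$.

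\textbf{The frequency-localized estimate.} I split $0<|t|\le1$ at the two scales $\la^{-a}$ (where $t|\xi|^a\sim1$) and $\la^{-(a-1)}$ (where the flow displaces $\{|\xi|\sim\la\}$ by a unit distance). For $0<|t|\le\la^{-a}$, writing $e^{it|\xi|^a}-1=it|\xi|^a\int_0^1e^{i\theta t|\xi|^a}d\theta$ gives $\rate f(x,t)=it^{1-\de}\int_0^1S^ag(x,\theta t)\,d\theta$ with $\hat g(\xi)=|\xi|^a\hat f(\xi)$, so $\sup_{0<|t|\le\la^{-a}}|\rate f(x,t)|\le\la^{-a(1-\de)}\sup_{|s|\le\la^{-a}}|S^ag(x,s)|$, and the energy inequality $\|\sup_{|s|\le T}|S^ag|\|_{L^2(\R)}^2\lesssim(1+T\la^a)\|g\|_{L^2(\R)}^2$ with $T=\la^{-a}$ bounds this by $\la^{a\de}\|f\|_{L^2}$. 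For $\la^{-(a-1)}<|t|\le1$, $|e^{it|\xi|^a}-1|/|t|^\de\le2\la^{(a-1)\de}$, hence $|\rate f|\le2\la^{(a-1)\de}(|S^af|+|f|)$, and the classical frequency-localized Schr\"odinger-type maximal estimate at regularity $\tfrac14$ (valid for $a>1$) bounds this by $\la^{(a-1)\de+1/4}\|f\|_{L^2}$. On the middle range $\la^{-a}<|t|\le\la^{-(a-1)}$, $|e^{it|\xi|^a}-1|/|t|^\de\le2|t|^{-\de}\le2\la^{a\de}$, so $|\rate f|\le2\la^{a\de}(\sup_{0<|t|\le\la^{-(a-1)}}|S^af|+|f|)$ and we are reduced to the \emph{short-time maximal estimate}
\[
\Big\|\sup_{0<|t|\le\la^{-(a-1)}}|S^af(x,t)|\Big\|_{L^2(B(0,1))}\lesssim(\log\la)^{1/2}\|f\|_{L^2(\R)},\qquad\hat f\ \text{supported in}\ \{|\xi|\sim\la\},
\]
which gives $\la^{a\de}(\log\la)^{1/2}\|f\|_{L^2}$ for this range. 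Summing the three contributions and using $\max\{a\de,(a-1)\de+\tfrac14\}=(a-1)\de+\max\{\de,\tfrac14\}$ completes the positive direction.

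\textbf{The main obstacle.} The short-time estimate displayed above is the crux of the argument: on a time interval of length $\la^{-(a-1)}$ dispersion moves wave packets by exactly $O(1)$, so the naive energy bound only yields the loss $(1+\la^{-(a-1)}\la^a)^{1/2}=\la^{1/2}$, far short of what is needed. I would prove it via $TT^*$: after linearizing the supremum ($t=t(x)$, $|t(x)|\le\la^{-(a-1)}$), the resulting operator has kernel $\int e^{i[(x-y)\xi+(t(x)-t(y))|\xi|^a]}\psi(\xi/\la)^2\,d\xi$; the symbol $|\xi|^a$ has non-vanishing second derivative precisely because $a>1$, so van der Corput's lemma applies, and a summation over the dyadic ranges of $|t(x)-t(y)|\lesssim\la^{-(a-1)}$ that is subtler than Schur's test produces the borderline logarithm. (Alternatively one may invoke a known one-dimensional local smoothing estimate for $e^{it(-\De)^{a/2}}$.) Establishing this, together with the full suite of frequency- and time-localized maximal bounds for $e^{it(-\De)^{a/2}}$, is where the real work lies; the logarithmic losses are harmless because of the strict inequality $s>(a-1)\de+\max\{\de,\tfrac14\}$.

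\textbf{Necessity.} By Stein's maximal principle, if \eqref{convergence rate} holds for all $f\in H^s(\R)$ then $\|\sup_{0<|t|\le1}|\rate f|\|_{L^{2,\infty}(B(0,1))}\lesssim\|f\|_{H^s(\R)}$, and it remains to break this whenever $s<(a-1)\de+\max\{\de,\tfrac14\}$ by testing on data of frequency $\la\to\infty$. If $\de\ge\tfrac14$, take $\hat f=\mathbf 1_{\{\la\le|\xi|\le2\la\}}$ (so $\|f\|_{L^2}\sim\la^{1/2}$); for $|x|\lesssim\la^{-1}$ and one fixed choice $|t|\sim c\la^{-a}$ one gets $|S^af(x,t)-f(x)|\sim c\la$, hence $|\rate f(x,t)|\gtrsim\la^{a\de+1}$ on a set of measure $\sim\la^{-1}$, so the left side is $\gtrsim\la^{a\de}\|f\|_{L^2}$; since $\|f\|_{H^s}\sim\la^s\|f\|_{L^2}$ this forces $a\de\le s$. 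If $\de<\tfrac14$, take $\hat f=\mathbf 1_{[\la,\la+\la^{1/2}]}$ (so $\|f\|_{L^2}\sim\la^{1/4}$ and $|f(x)|\lesssim|x|^{-1}$); for each $x$ in an interval of length $\sim1$ with $\la^{-1/4}\le|x|\le1$ there is $t_x$, $|t_x|\sim|x|\la^{-(a-1)}$, for which $x\xi+t_x|\xi|^a$ varies by $O(1)$ on the support, so $|S^af(x,t_x)|\gtrsim\la^{1/2}\gg|f(x)|$ and $|\rate f(x,t_x)|\gtrsim|x|^{-\de}\la^{(a-1)\de+1/2}$; since $\de<\tfrac12$ the function $|x|^{-\de}$ on that interval has $L^{2,\infty}(B(0,1))$-norm $\gtrsim1$, so the left side is $\gtrsim\la^{(a-1)\de+1/2}$, and comparing with $\|f\|_{H^s}\sim\la^{s+1/4}$ forces $(a-1)\de+\tfrac14\le s$. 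In either case $s\ge(a-1)\de+\max\{\de,\tfrac14\}$.
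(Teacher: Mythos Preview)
Your overall architecture for the positive direction matches the paper's (Littlewood--Paley, then split the time interval at $\la^{-a}$ and $\la^{-(a-1)}$), and your treatment of the small-time and large-time regimes is fine. The genuine gap is in the middle regime $\la^{-a}<|t|\le\la^{-(a-1)}$: the short-time maximal estimate you state,
\[
\Big\|\sup_{0<|t|\le\la^{-(a-1)}}|S^af(\cdot,t)|\Big\|_{L^2(B(0,1))}\lesssim(\log\la)^{1/2}\|f\|_{L^2},\qquad \operatorname{supp}\hat f\subset\{|\xi|\sim\la\},
\]
is \emph{false}, and no amount of $TT^*$ cleverness will recover it. Take $\hat f(\xi)=\la^{-1/4}\hat\phi(\la^{-1/2}(\xi-\la))$ with $\phi$ Schwartz, $\phi(0)\neq 0$, $\|\phi\|_2=1$; this is a standard wave packet at frequency $\la$ of spatial width $\la^{-1/2}$ with $\|f\|_2=1$. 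Expanding the phase, for $|t|\le\la^{-(a-1)}$ one has
\[
|S^af(x,t)|=\frac{\la^{1/4}}{2\pi}\Big|\int e^{i\la^{1/2}(x+at\la^{a-1})\eta+O(1)}\hat\phi(\eta)\,d\eta\Big|,
\]
so at $t_x=-x/(a\la^{a-1})$ the linear phase vanishes and $|S^af(x,t_x)|\gtrsim\la^{1/4}$ for all $|x|\le c_0$. Thus $\|\sup_{|t|\le\la^{-(a-1)}}|S^af|\|_{L^2(B(0,1))}\gtrsim\la^{1/4}$, not $(\log\la)^{1/2}$. Plugging this into your scheme yields $\la^{a\de+1/4}$ for the middle range, which exceeds the target $\la^{(a-1)\de+\max\{\de,1/4\}}$ by exactly $\la^{\min\{\de,1/4\}}$ and therefore gives nothing for $\de>0$.

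The problem is that you bounded the weight $|t|^{-\de}\le\la^{a\de}$ uniformly on the middle range; this is sharp only at the left endpoint and throws away the gain at larger $t$. The paper's fix is to decompose $(\la^{-a},\la^{-(a-1)})$ dyadically into $J_l=[2^{-l},2^{-l+1}]$ and, on each $J_l$, pair the sharp weight $|t|^{-\de}\sim 2^{l\de}$ with the Dimou--Seeger bound $\|\sup_{t\in J_l}|S^af|\|_{L^2(\R)}\lesssim |J_l|^{1/4}\la^{a/4}=2^{-l/4}\la^{a/4}$. The resulting sum $\sum_{(a-1)k\le l\le ak}2^{l(\de-1/4)}\la^{a/4}$ is geometric and lands exactly on $\la^{(a-1)\de+\max\{\de,1/4\}}$ (with an extra factor $k$ only at $\de=1/4$). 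The point is that the interplay between the $t$-weight and the $|J|^{1/4}$ saving is essential; treating them separately cannot work.

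Your necessity argument is in the right spirit and reaches the correct thresholds via two test functions (essentially the paper's $\ep\to0$ and $\ep\to1$ endpoints). Two caveats: Stein's maximal principle applies to \emph{countable} families, and in your $\de<\tfrac14$ case the time $t_x$ varies continuously with $x$; the paper handles this by fixing in advance a sequence $t_n=n^{-\ga}$ (with $\ga=2(a-1)$) and checking the example still works along it. You should indicate how to discretize. Also, your claim ``$|f(x)|\lesssim|x|^{-1}$'' for the second example is correct but you should note that the comparison $|S^af(x,t_x)|\gg|f(x)|$ only holds on $|x|\ge\la^{-1/4}$, which is what you use.
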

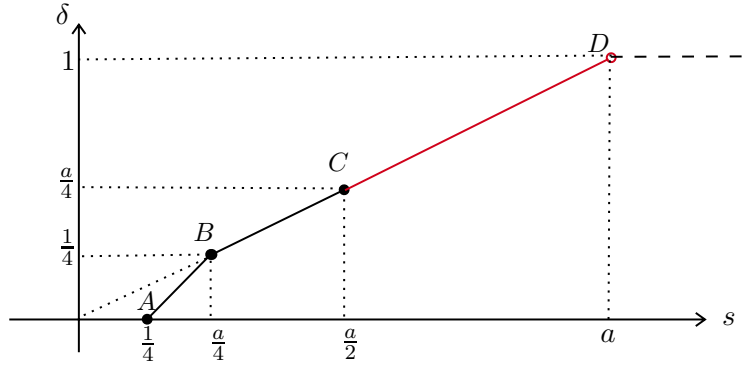
\begin{figure}
	\centering
	\tikzset{every picture/.style={line width=0.75pt}} 
	
	\begin{tikzpicture}[x=0.5pt,y=0.5pt,yscale=-1,xscale=1]
	
	\draw  [dash pattern={on 0.84pt off 2.51pt}]  (103.33,148.67) -- (301.33,149.67) ;
	\draw  (48,248.87) -- (574.33,248.87)(100.63,25.67) -- (100.63,273.67) (567.33,243.87) -- (574.33,248.87) -- (567.33,253.87) (95.63,32.67) -- (100.63,25.67) -- (105.63,32.67)  ;
	\draw    (152.33,248.67) -- (198.69,201.35) ;
	\draw [shift={(200.33,199.67)}, rotate = 314.41] [color={rgb, 255:red, 0; green, 0; blue, 0 }  ][line width=0.75]      (0, 0) circle [x radius= 3.35, y radius= 3.35]   ;
	\draw [shift={(152.33,248.67)}, rotate = 314.41] [color={rgb, 255:red, 0; green, 0; blue, 0 }  ][fill={rgb, 255:red, 0; green, 0; blue, 0 }  ][line width=0.75]      (0, 0) circle [x radius= 3.35, y radius= 3.35]   ;
	\draw    (201.33,199.67) -- (301.33,150.67) ;
	\draw [shift={(301.33,150.67)}, rotate = 333.9] [color={rgb, 255:red, 0; green, 0; blue, 0 }  ][fill={rgb, 255:red, 0; green, 0; blue, 0 }  ][line width=0.75]      (0, 0) circle [x radius= 3.35, y radius= 3.35]   ;
	\draw [shift={(201.33,199.67)}, rotate = 333.9] [color={rgb, 255:red, 0; green, 0; blue, 0 }  ][fill={rgb, 255:red, 0; green, 0; blue, 0 }  ][line width=0.75]      (0, 0) circle [x radius= 3.35, y radius= 3.35]   ;
	\draw  [dash pattern={on 0.84pt off 2.51pt}]  (104.33,245.67) -- (203.33,199.67) ;
	\draw [color={rgb, 255:red, 208; green, 2; blue, 27 }  ,draw opacity=1 ]   (302,151) -- (501.23,51.71) ;
	\draw [shift={(503.33,50.67)}, rotate = 333.51] [color={rgb, 255:red, 208; green, 2; blue, 27 }  ,draw opacity=1 ][line width=0.75]      (0, 0) circle [x radius= 3.35, y radius= 3.35]   ;
	\draw  [dash pattern={on 0.84pt off 2.51pt}]  (103,201) -- (199.33,199.67) ;
	\draw  [dash pattern={on 0.84pt off 2.51pt}]  (200.33,197.67) -- (200.33,247.67) ;
	\draw  [dash pattern={on 0.84pt off 2.51pt}]  (301.33,246.67) -- (301.33,151.67) ;
	\draw  [dash pattern={on 0.84pt off 2.51pt}]  (502.33,52.67) -- (501.33,249.67) ;
	\draw  [dash pattern={on 4.5pt off 4.5pt}]  (503,50) -- (604.33,49.67) ;
	\draw  [dash pattern={on 0.84pt off 2.51pt}]  (100,52) -- (504.33,49) ;

	\draw (585,241.4) node [anchor=north west][inner sep=0.75pt]  [font=\large]  {$s$};
	\draw (81,7.4) node [anchor=north west][inner sep=0.75pt]  [font=\large]  {$\delta $};
	\draw (85,43.4) node [anchor=north west][inner sep=0.75pt]  [font=\large]  {$1$};
	\draw (82,131.4) node [anchor=north west][inner sep=0.75pt]  [font=\large]  {$\frac{a}{4}$};
	\draw (83,178.4) node [anchor=north west][inner sep=0.75pt]  [font=\large]  {$\frac{1}{4}$};
	\draw (493,254.4) node [anchor=north west][inner sep=0.75pt]  [font=\large]  {$a$};
	\draw (142,225.4) node [anchor=north west][inner sep=0.75pt]    {$A$};
	\draw (185,173.4) node [anchor=north west][inner sep=0.75pt]    {$B$};
	\draw (287,120.4) node [anchor=north west][inner sep=0.75pt]    {$C$};
	\draw (483,30.4) node [anchor=north west][inner sep=0.75pt]    {$D$};
	\draw (144,251.4) node [anchor=north west][inner sep=0.75pt]  [font=\large]  {$\frac{1}{4}$};
	\draw (197,254.4) node [anchor=north west][inner sep=0.75pt]  [font=\large]  {$\frac{a}{4}$};
	\draw (297,253.4) node [anchor=north west][inner sep=0.75pt]  [font=\large]  {$\frac{a}{2}$};
	\end{tikzpicture}
	\caption{The relationship between the sharp convergence rate $ \de $ and the regularity $ s $ for $ a>1 $ when $ n=1 $. The  endpoint problem for the open edge $ (A,B) $ and the closed edge $ [B,C] $ remains open.}
	\label{1}
\end{figure}

We also obtain the following sharp results for the case $ 0<a<1 $.
\begin{thm}\label{thm4}
	Let $ n=1$, $0<a<1$, $0\leq \de < 1 $,  and assume $ \max\{\frac{1}{4}, \de\}\cdot a <s $.Then (\ref{convergence rate}) holds for all $ f\in H^s(\R) $. Conversely,  if (\ref{convergence rate}) holds for all $ f\in H^s(\R) $, then $  \max\{\frac{1}{4}, \de\}\cdot a\leq s $.
\end{thm}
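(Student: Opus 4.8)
The plan is to treat the two assertions of Theorem~\ref{thm4} separately. \emph{Sufficiency.} Since $\delta<1$, for Schwartz $g$ we have $|S^ag(x,t)-g(x)|\le|t|\int|\xi|^a|\widehat g(\xi)|\,d\xi$, so $\rate g(\cdot,t)\to0$ uniformly as $t\to0$; hence, by a routine density argument, \eqref{convergence rate} for all $f\in H^s(\R)$ follows from the maximal estimate
\[
\Bigl\|\sup_{0<|t|<1}\bigl|\rate f(\cdot,t)\bigr|\Bigr\|_{L^2(B(0,1))}\lesssim\|f\|_{H^s(\R)},\qquad s>a\max\{\tfrac14,\delta\},
\]
which a translation and covering argument promotes from $B(0,1)$ to $\R$. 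After a Littlewood--Paley decomposition the low-frequency piece is harmless, because $|e^{it|\xi|^a}-1|\,|t|^{-\delta}\le|t|^{1-\delta}|\xi|^a\lesssim1$ on $\{|\xi|\lesssim1\}$; so it remains to show, for every $s'>a\max\{\tfrac14,\delta\}$ and $\lambda=2^k\ge2$,
\[
\Bigl\|\sup_{0<|t|<1}\bigl|\rate(P_\lambda f)(\cdot,t)\bigr|\Bigr\|_{L^2(B(0,1))}\lesssim_{s'}\lambda^{s'}\|P_\lambda f\|_{L^2},
\]
since summing this in $k$ (Cauchy--Schwarz, with $a\max\{\tfrac14,\delta\}<s'<s$) then completes the argument.

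\emph{Proof of the frequency-localized bound; the main obstacle.} Split $\sup_{0<|t|<1}$ into $|t|\le\lambda^{-a}$ and $\lambda^{-a}<|t|<1$. For the short times, Duhamel's formula $S^a(t)-\mathrm{Id}=i\int_0^tS^a(s)(-\Delta)^{a/2}\,ds$ gives $|\rate(P_\lambda f)(x,t)|\le|t|^{1-\delta}\sup_{|s|\le\lambda^{-a}}|S^a(s)(-\Delta)^{a/2}P_\lambda f(x)|$; since $\sup_{|t|\le\lambda^{-a}}|t|^{1-\delta}=\lambda^{-a(1-\delta)}$, since $(-\Delta)^{a/2}P_\lambda f$ has frequency $\sim\lambda$ and $L^2$-norm $\sim\lambda^a\|P_\lambda f\|_2$, and since over a time interval of rescaled length $\lesssim1$ the maximal operator of $S^a$ is bounded on $L^2(\R)$ with an absolute constant (the kernel, after rescaling to unit frequency, being essentially concentrated near the origin by non-stationary phase), this part is $\lesssim\lambda^{-a(1-\delta)}\lambda^a\|P_\lambda f\|_2=\lambda^{a\delta}\|P_\lambda f\|_2$. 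For the remaining times decompose dyadically, $|t|\sim2^{-j}$ with $0\le j\lesssim a\log_2\lambda$, and estimate $|\rate(P_\lambda f)(x,t)|\lesssim2^{j\delta}\bigl(\sup_{|t|\sim2^{-j}}|S^a(t)P_\lambda f(x)|+|P_\lambda f(x)|\bigr)$. The crucial input is the frequency-localized maximal estimate
\[
\Bigl\|\sup_{0<t<T}|S^a(t)g|\Bigr\|_{L^2(\R)}\lesssim T^{1/4}\|g\|_{L^2(\R)},\qquad\widehat g\ \text{supported in}\ \{|\xi|\sim1\},\ \ T\ge1,\ \ 0<a<1,
\]
the $0<a<1$ counterpart of Walther's theorem \cite{1995walther}, to which it reduces under the parabolic rescaling $(x,t)\mapsto(\lambda x,\lambda^at)$ with $\lambda=T^{1/a}$. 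Rescaling $P_\lambda f$ to unit frequency converts $\sup_{|t|\sim2^{-j}}$ into $\sup_{0<\tau\lesssim\lambda^a2^{-j}}$, so this yields $\|\sup_{|t|\sim2^{-j}}|S^a(t)P_\lambda f|\|_{L^2(B(0,1))}\lesssim(\lambda^a2^{-j})^{1/4}\|P_\lambda f\|_2$, whence the $|t|\sim2^{-j}$ contribution is $\lesssim\lambda^{a/4}2^{j(\delta-1/4)}\|P_\lambda f\|_2$. Summing over $0\le j\lesssim a\log_2\lambda$ produces $\lambda^{a\delta}\|P_\lambda f\|_2$ when $\delta>\tfrac14$, $\lambda^{a/4}\|P_\lambda f\|_2$ when $\delta<\tfrac14$, and $\lambda^{a/4}\log\lambda\,\|P_\lambda f\|_2$ when $\delta=\tfrac14$; combined with the short-time term this is $\lesssim_{s'}\lambda^{s'}\|P_\lambda f\|_2$ for any $s'>a\max\{\tfrac14,\delta\}$, as required. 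The only nonelementary ingredient is the displayed maximal estimate for $S^a$, and this is where the real difficulty lies.

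\emph{Necessity.} If \eqref{convergence rate} holds for all $f\in H^s$, then in particular $S^af(x,t)\to f(x)$ a.e.\ for all such $f$, so the sharpness of the pointwise-convergence threshold \cite{1995walther} forces $s\ge a/4$; it remains to exclude $s<a\delta$, which is relevant only when $\delta>\tfrac14$, in which case we may assume $s\ge a/4>0$. Fix $c\in(0,2\pi)$ and an even, nonnegative $\widehat\theta\in C_c^\infty(\R)$ with $\widehat\theta(0)>0$, so that $|\theta(x)|\gtrsim1$ on a fixed ball $\{|x|\le\rho\}$, and set $\phi_N(x)=e^{iNx}\theta(x)$, which is frequency-supported in $\{|\xi-N|\le1\}$ with $\|\phi_N\|_{H^s}\sim N^s$. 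At the resonant time $t_N=cN^{-a}$ one has $e^{it_N|\xi|^a}=e^{ic}+O(N^{-1})$ uniformly for $|\xi-N|\le1$, hence $S^a\phi_N(x,t_N)-\phi_N(x)=(e^{ic}-1)\phi_N(x)+O(N^{-1})$, and therefore $|\rate\phi_N(x,t_N)|\gtrsim N^{a\delta}$ on $\{|x|\le\rho\}$ for $N$ large. Taking $f=\sum_j j^{-1}\lambda_j^{-s}\phi_{\lambda_j}$ along a sufficiently lacunary sequence $\lambda_j\to\infty$ gives $f\in H^s$ (disjoint frequency supports), and on $\{|x|\le\rho\}$ the $j$-th block dominates $\rate f(\cdot,t_{\lambda_j})$, since (using $|e^{it|\xi|^a}-1|\le\min(2,|t||\xi|^a)$ and $0<s<a$) the other blocks contribute $o(j^{-1}\lambda_j^{a\delta-s})$ there once the lacunarity is fast enough; thus $|\rate f(x,t_{\lambda_j})|\gtrsim j^{-1}\lambda_j^{a\delta-s}$ on the fixed positive-measure set $\{|x|\le\rho\}$, which blows up along $t_{\lambda_j}\to0$ as soon as $s<a\delta$, contradicting \eqref{convergence rate}. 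Hence $s\ge a\delta$, and with $s\ge a/4$ this gives $s\ge a\max\{\tfrac14,\delta\}$. (Alternatively, $s\ge a\delta$ may be obtained by applying Stein's maximal principle to the translation-invariant operators $\rate(\cdot,t)$ and testing against the wave packets $\phi_N$.)
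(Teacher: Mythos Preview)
Your sufficiency argument follows the paper's route exactly: Littlewood--Paley reduction, a short-time/long-time split at $|t|=\lambda^{-a}$, Taylor/Duhamel for short times, and a dyadic decomposition of $(\lambda^{-a},1)$ together with a frequency-localized maximal bound for long times. The one point that needs correction is your justification of the key input
\[
\Bigl\|\sup_{0<t<T}|S^a(t)g|\Bigr\|_{L^2(\R)}\lesssim T^{1/4}\|g\|_{L^2},\qquad \widehat g\subset\{|\xi|\sim1\},\ T\ge1.
\]
You say this ``reduces to Walther's theorem via parabolic rescaling''. It does not: Walther's estimate is \emph{local}, $\|\sup_{0<t<1}|S^af|\|_{L^2(B(0,1))}\lesssim\|f\|_{H^{a/4+\epsilon}}$, and after rescaling your needed inequality (even in the local form $\|\sup_{|t|\sim2^{-j}}|S^a P_\lambda f|\|_{L^2(B(0,1))}$) becomes an estimate on the large ball $B(0,\lambda T^{-1/a})$, whose radius can be as big as $\lambda$; covering by unit balls produces an unacceptable loss. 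The global endpoint estimate you state is precisely Proposition~2.1 of Dimou--Seeger \cite{2020DimouSeeger} (Lemma~\ref{dimou} in the paper), and the paper invokes it as such. Once you replace ``Walther via rescaling'' by a citation of Dimou--Seeger, your sufficiency proof is correct and coincides with the paper's.

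For necessity your approach is genuinely different and more economical. The paper runs a single parametrized Dahlberg--Kenig family $f_\lambda^\epsilon$ through Stein's maximal principle (Proposition~\ref{maximal principle}) and extracts both constraints $s\ge a/4$ and $s\ge a\delta$ from the resulting inequality by varying $\epsilon$ and the decay exponent $\gamma$ of the time sequence. You instead split the two constraints: $s\ge a/4$ is pulled directly from the known sharpness of pointwise convergence \cite{1995walther}, and $s\ge a\delta$ is obtained from simple unit-width wave packets $\phi_N=e^{iNx}\theta$ at the resonant time $t_N=cN^{-a}$, assembled into a lacunary series (or, as you note, fed into Stein's principle). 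Your route is shorter and avoids the phase analysis of the $E_i(\xi)$ terms; the paper's route has the advantage of producing both thresholds from one construction and of being the template that also handles the $a>1$ case in Section~\ref{negative}.
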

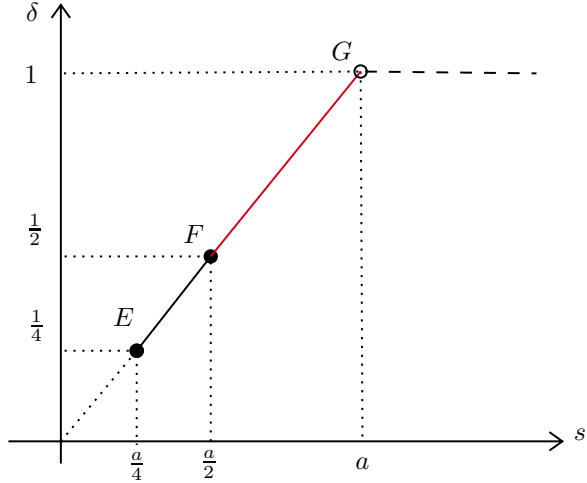
\begin{figure}
	\centering
	\tikzset{every picture/.style={line width=0.75pt}} 
	
	\begin{tikzpicture}[x=0.7pt,y=0.7pt,yscale=-1,xscale=1]
	
	\draw  (71,249) -- (370.33,249)(99.33,13) -- (99.33,261) (363.33,244) -- (370.33,249) -- (363.33,254) (94.33,20) -- (99.33,13) -- (104.33,20)  ;
	\draw  [dash pattern={on 0.84pt off 2.51pt}]  (180.33,249) -- (180.33,149) ;
	\draw  [dash pattern={on 0.84pt off 2.51pt}]  (140.33,202.35) -- (140.33,251) ;
	\draw [shift={(140.33,200)}, rotate = 90] [color={rgb, 255:red, 0; green, 0; blue, 0 }  ][line width=0.75]      (0, 0) circle [x radius= 3.35, y radius= 3.35]   ;
	\draw  [dash pattern={on 0.84pt off 2.51pt}]  (101.33,200) -- (140.33,200) ;
	\draw  [dash pattern={on 0.84pt off 2.51pt}]  (101.33,149) -- (180.33,149) ;
	\draw  [dash pattern={on 0.84pt off 2.51pt}]  (99.33,249) -- (140.33,200) ;
	\draw    (140.33,200) -- (180.33,149) ;
	\draw [shift={(180.33,149)}, rotate = 308.11] [color={rgb, 255:red, 0; green, 0; blue, 0 }  ][fill={rgb, 255:red, 0; green, 0; blue, 0 }  ][line width=0.75]      (0, 0) circle [x radius= 3.35, y radius= 3.35]   ;
	\draw [shift={(140.33,200)}, rotate = 308.11] [color={rgb, 255:red, 0; green, 0; blue, 0 }  ][fill={rgb, 255:red, 0; green, 0; blue, 0 }  ][line width=0.75]      (0, 0) circle [x radius= 3.35, y radius= 3.35]   ;
	\draw  [dash pattern={on 0.84pt off 2.51pt}]  (261.33,49) -- (262.33,250) ;
	\draw  [dash pattern={on 0.84pt off 2.51pt}]  (101,50) -- (261.33,49) ;
	\draw [color={rgb, 255:red, 208; green, 2; blue, 27 }  ,draw opacity=1 ]   (180.33,149) -- (261.33,49) ;
	\draw  [dash pattern={on 4.5pt off 4.5pt}]  (263.68,49.02) -- (356.33,50) ;
	\draw [shift={(261.33,49)}, rotate = 0.6] [color={rgb, 255:red, 0; green, 0; blue, 0 }  ][line width=0.75]      (0, 0) circle [x radius= 3.35, y radius= 3.35]   ;

	\draw (79,10.4) node [anchor=north west][inner sep=0.75pt]    {$\delta $};
	\draw (375,240.4) node [anchor=north west][inner sep=0.75pt]    {$s$};
	\draw (78,44.4) node [anchor=north west][inner sep=0.75pt]    {$1$};
	\draw (79,124.4) node [anchor=north west][inner sep=0.75pt]    {$\frac{1}{2}$};
	\draw (80,176.4) node [anchor=north west][inner sep=0.75pt]    {$\frac{1}{4}$};
	\draw (134,253.4) node [anchor=north west][inner sep=0.75pt]    {$\frac{a}{4}$};
	\draw (173,252.4) node [anchor=north west][inner sep=0.75pt]    {$\frac{a}{2}$};
	\draw (257,256.4) node [anchor=north west][inner sep=0.75pt]    {$a$};
	\draw (126,175.4) node [anchor=north west][inner sep=0.75pt]    {$E$};
	\draw (164,130.4) node [anchor=north west][inner sep=0.75pt]    {$F$};
	\draw (244,31.4) node [anchor=north west][inner sep=0.75pt]    {$G$};

	\end{tikzpicture}        
	\caption{The relationship between $ \de $ and $ s $ for the case $ 0<a<1 $ when $ n=1 $. The endpoint problem for the closed edge $ [E,F] $ is unknown. }
	\label{2}
\end{figure}

\begin{rem}
	For the case $ 0<a\neq 1 $ and $ \frac{1}{2}<\de<1 $, the borderline $ s=a\de $ is covered by Theorem \ref{thm1Cowling} (see the open edges $ (C,D) $ and $ (F,G) $ in Figure \ref{1} and Figure \ref{2} respectively). Note that, for the case $ 0<a< 1 $, there is a surprising jump for $ \de $ when $ s $ is bigger than the critical regularity $ \frac{a}{4} $ even though we do not know whether (\ref{point})  holds true for the endpoint $ s=\frac{a}{4} $.
\end{rem}
For the exceptional case $ a=1 $ which corresponds to the  wave operator, we can obtain the optimal range in all dimensions $ n\geq 1 $. As in the case $ 0<a<1 $, there is also a surprising jump for $ \de $ when $ s>\frac{1}{2} $.
\begin{thm}\label{thm5}
	Let $ n\geq 1$, $ a=1 $ and $0\leq \de<1  $. Then (\ref{convergence rate}) holds for all $ f\in H^s(\R^n) $ if and only if $ s>\frac{1}{2} $ and $ s\geq \de $.
\end{thm}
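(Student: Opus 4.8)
The plan is to split into the two directions, reducing sufficiency to a frequency--localized maximal estimate and absorbing the single edge $s=\de$, $\frac12<\de<1$, into Theorem~\ref{thm1Cowling}; necessity will come from the known endpoint for (\ref{point}) together with a bump-type test function.

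\emph{Sufficiency.} Fix a Littlewood--Paley decomposition $f=P_{\le 1}f+\sum_{N}P_Nf$ over dyadic $N\ge 1$, where $P_N$ localizes the frequency to $|\xi|\sim N$. The core is the estimate
\[
\Big\|\sup_{0<|t|<1}\big|R_\de^1\,P_Nf(\cd,t)\big|\Big\|_{L^2(\R^n)}\ \les_\ep\ N^{\max\{\frac12,\,\de\}+\ep}\,\norm{P_Nf}_{L^2(\R^n)} .
\]
To prove it I would decompose the supremum dyadically in $|t|\sim\tau$ and, on each block, note that the weight $|t|^{-\de}$ is comparable to $\tau^{-\de}$, so it suffices to control $\sup_{|t|\sim\tau}|g(\cd,t)|$ with $g(x,t)=(e^{it(-\Delta)^{1/2}}-1)P_Nf(x)$. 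A Sobolev-type bound in $t$ (fundamental theorem of calculus and Cauchy--Schwarz) reduces this to $L^2_{x,t}$-norms of $g$ and $\pa_t g$; here two features of the half-wave flow enter: $e^{it(-\Delta)^{1/2}}$ is unitary, and $\pa_t$ costs only one power of $N$. For $\tau\le N^{-1}$ one also uses $|e^{it|\xi|}-1|\les\tau N$ on the support, obtaining $\les\tau^{1-\de}N\norm{P_Nf}_2$ per block, which sums (since $\de<1$) to $\les N^\de\norm{P_Nf}_2$; for $N^{-1}<\tau<1$ one uses only $|e^{it|\xi|}-1|\les 1$, obtaining $\les\tau^{\frac12-\de}N^{1/2}\norm{P_Nf}_2$ per block, which sums to $\les N^{1/2}\log(2+N)$ if $\de\le\frac12$ and $\les N^\de$ if $\de>\frac12$. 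This proves the displayed bound; the low-frequency term is handled the same way and is $O(\norm{f}_{H^s})$. Summing against the Littlewood--Paley pieces of $f\in H^s(\R^n)$ by Cauchy--Schwarz gives $\big\|\sup_{0<|t|<1}|R_\de^1 f(\cd,t)|\big\|_{L^2}\les\norm{f}_{H^s}$ whenever $s>\max\{\frac12,\de\}$, and the standard density argument (for Schwartz $f$ one has $e^{it(-\Delta)^{1/2}}f(x)-f(x)=O(|t|)=o(|t|^\de)$ since $\de<1$) upgrades this to (\ref{convergence rate}) for all such $s$. The remaining edge $s=\de$ with $\frac12<\de<1$ is precisely the case $s=a\de$, $a=1$, of Theorem~\ref{thm1Cowling} with $H=(-\Delta)^{1/2}$, already recorded in (\ref{equivalence}). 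These two cases cover exactly the region $s>\frac12$, $s\ge\de$.

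\emph{Necessity.} First, (\ref{convergence rate}) implies (\ref{point}) --- for $\de>0$ because $o(|t|^\de)=o(1)$, and for $\de=0$ because they coincide --- and (\ref{point}) for $a=1$ forces $s>\frac12$ in every dimension by the results cited in the introduction (\cite{1999Walther,2023WZ}). For the remaining inequality $s\ge\de$, by the Nikishin--Stein maximal principle, if (\ref{convergence rate}) (equivalently (\ref{equivalence})) holds for all $f\in H^s(\R^n)$ then a weak-type maximal inequality $\big\|\sup_{0<|t|<1}|R_\de^1 f(\cd,t)|\big\|_{L^{q,\infty}(B(0,1))}\les\norm{f}_{H^s}$ holds for some $q\in(0,\infty)$. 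Testing this on $f_N$ with $\widehat{f_N}(\xi)=N^{-s}\mathbf{1}_{\{|\xi-Ne_1|\le 1\}}$, so that $\norm{f_N}_{H^s}\sim 1$ and $|f_N(x)|\gtrsim N^{-s}$ for $|x|\le c$, and evaluating at $t=N^{-1}$ --- where $e^{iN^{-1}|\xi|}=e^{i}+O(N^{-1})$ uniformly on $\supp\widehat{f_N}$, hence $e^{iN^{-1}(-\Delta)^{1/2}}f_N(x)-f_N(x)=(e^{i}-1)f_N(x)+O(N^{-1-s})$ --- gives $|R_\de^1 f_N(x,N^{-1})|\gtrsim N^{\de-s}$ on $\{|x|\le c\}$, so the maximal inequality forces $N^{\de-s}\les 1$ for all large $N$, i.e.\ $s\ge\de$. (A direct lacunary-superposition construction, carried out in the range $s>\frac12$ where the off-diagonal blocks are controllable, reaches the same conclusion without invoking the maximal principle.)

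\emph{Main obstacle.} The point is that, for $a=1$, no genuinely hard oscillatory-integral estimate is needed: unitarity of $e^{it(-\Delta)^{1/2}}$ plus the fact that $\pa_t$ produces a \emph{single} power of the frequency makes the elementary Sobolev-in-$t$ argument sharp, which is exactly what fails for $a>1$. The delicate points are therefore organizational --- gluing the two $t$-regimes so that the frequency-localized exponent is precisely $\max\{\frac12,\de\}$ with no superfluous loss, normalizing $f_N$ so its $H^s$-norm and its pointwise lower bound are matched, and citing the maximal principle in a form valid for the space $H^s(\R^n)$.
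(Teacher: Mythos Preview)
Your argument is correct, but both halves take a different route from the paper. For sufficiency the paper does not run any frequency--localized Sobolev--in--$t$ computation at all: it simply applies Cowling's Theorem~\ref{thm1Cowling} with $H=(-\Delta)^{1/2}$ (after replacing $\de\le\tfrac12$ by some $\de'\in(\tfrac12,s)$, exactly as in the first lines of the proof of Proposition~\ref{2.1}), which already covers the full region $s>\tfrac12$, $s\ge\de$ in one stroke; your hands--on estimate reaches the same range and makes explicit why $a=1$ is soft (unitarity of $e^{it(-\Delta)^{1/2}}$ together with $\pa_t\sim N$), but at the cost of more work. For the necessity of $s\ge\de$ the roles reverse: your single bump $\widehat{f_N}=N^{-s}\mathbf{1}_{\{|\xi-Ne_1|\le1\}}$ evaluated at $t=N^{-1}$ (which sits in the fixed sequence $t_n=n^{-1}$, so Proposition~\ref{maximal principle} applies) is considerably lighter than the paper's construction. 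The paper instead builds a radial two--parameter family $\widehat{g_\la^\ep}(\xi)=\la^{-\frac{n+1}{2}}\phi\big((|\xi|-\la^{1+\ep})/\la\big)$, expands $S^1g_\la^\ep$ via the Bessel--type asymptotics of $\int_{\mathbb{S}^{n-1}}e^{ix\cdot\theta}d\theta$, matches $|x|\sim t_{n(x)}$ along $t_n=n^{-2}$ on an annulus $|x|\sim\la^{-1+\ep}$, and only then sends $\ep\to0^+$ to obtain $\de\le s$. The reason for the paper's heavier counterexample is stated in the text: the radial template is written once so that it can also carry the negative parts of Theorems~\ref{thm3} and~\ref{thm4} to all dimensions $n\ge1$; your bump succeeds here precisely because for $a=1$ the phase $t|\xi|$ is essentially constant on the support at the chosen time, a simplification that is not available when $a\neq1$.
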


\subsubsection*{Notations}
Throughout this paper, we shall use $ c $ and $ C$ to denote positive constants, which may differ from line to line. By $A\lesssim B  $, we mean that there exists a constant $ C $    independent of the relevant parameters such that $ A\leq CB $ and similarly we use $ A\ll B $ to mean that $ A$ is far less than $ B $. We write $ A\sim B $ if $ A\lesssim B $ and $ B\lesssim  A $. We denote the annulus $ \{\xi\in R:|\xi|\sim R\}$ by $ A(R) $. $ \hat{f} $ and $ \check{f} $ denote the Fourier transform and the inverse Fourier transform respectively.
 \par Our paper is organized as follows. We prove sufficient conditions and necessary conditions for the case  $a>1$ in Section \ref{positive} and  Section \ref{negative} respectively, which provides typical methods for other cases. The proof of Theorem \ref{thm4} and Theorem \ref{thm5} is given in Section \ref{remaining}.

\section{positive results for the case $a>1$}\label{positive}
 By a similar standard argument, it suffices to show the following maximal estimate.
\begin{prop}\label{2.1}
	Let $a>1$, $0\leq \de < 1 $. Then for any $ f\in H^s(\R) $ with $ s> (a-1)\de+\max\{\de,\frac{1}{4}\}$, we have
	\begin{equation}
	\Big\|\sup_{0<t<1}|R_\de^af(\cdot,t)|\Big\|_{L^2(B)}\leq C\norm{f}_{H^s(\R)},\label{maximal estimate}
	\end{equation}
	where $  B $ denotes the unit ball in $ \R $ and the constant $ C $ is independent of $ f $.
\end{prop}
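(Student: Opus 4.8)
The plan is to prove \eqref{maximal estimate} after a Littlewood--Paley decomposition, by isolating a frequency-localized bound and summing in frequency. Write $f=f_{\mathrm{low}}+\sum_{k\ge 1}f_k$, where $\widehat{f_k}$ is supported in $\{|\xi|\sim R\}$ with $R=2^k$ and $\widehat{f_{\mathrm{low}}}$ is supported in $\{|\xi|\le 2\}$. The low-frequency piece is harmless: from $\rate f_{\mathrm{low}}(x,t)=i|t|^{1-\de}\int_0^1 e^{i\tau t(-\Delta)^{a/2}}F(x)\,d\tau$ with $F:=(-\Delta)^{a/2}f_{\mathrm{low}}$, and $|t|^{1-\de}\le 1$, one gets $\sup_{0<t<1}|\rate f_{\mathrm{low}}(x,t)|\le \sup_{0\le\si\le1}|e^{i\si(-\Delta)^{a/2}}F(x)|\lesssim \|\widehat F\|_{L^1}\lesssim \|f_{\mathrm{low}}\|_{L^2}\lesssim\norm{f}_{H^s(\R)}$ for every $x$, using that $\widehat F$ has compact support. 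Thus it suffices to establish, for every $\ep>0$ and $R=2^k\ge 1$,
\begin{equation*}
\Big\|\sup_{0<t<1}|\rate f_k(\cdot,t)|\Big\|_{L^2(B)}\lesssim_{\ep} R^{\,\max\{1/4,\,a\de\}+\ep}\norm{f_k}_{L^2},
\end{equation*}
since then Cauchy--Schwarz in $k$ yields \eqref{maximal estimate}: indeed $\max\{1/4,a\de\}\le(a-1)\de+\max\{\de,1/4\}<s$, so $\ep$ may be chosen with $\max\{1/4,a\de\}+\ep<s$.

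For the contribution of small times $0<t\lesssim R^{-a}$ I would exploit the vanishing of $e^{it|\xi|^a}-1$ at $t=0$. Writing $e^{it|\xi|^a}-1=it|\xi|^a g(t|\xi|^a)$ with $g(u):=(e^{iu}-1)/(iu)$, which is smooth with bounded derivatives on compact sets, one has $\rate f_k(\cdot,t)=i\,t^{1-\de}\,\mathrm{Op}\!\big(|\xi|^a g(t|\xi|^a)\psi(\xi/R)\big)f_k$, where $\psi$ is a fattened cutoff equal to $1$ on $\supp\widehat{f_k}$. When $t\lesssim R^{-a}$ one has $t|\xi|^a\lesssim 1$ on $\{|\xi|\sim R\}$, so the symbol $|\xi|^a g(t|\xi|^a)\psi(\xi/R)$ is a bump of height $\sim R^a$ adapted to $\{|\xi|\sim R\}$ whose $\ell$-th $\xi$-derivative is $O(R^{a-\ell})$ uniformly in such $t$; hence its kernel is $\lesssim R^a$ times an $L^1$-normalized bump at scale $R^{-1}$, and the operator is pointwise dominated by $R^a M$, $M$ the Hardy--Littlewood maximal function, uniformly in $t\lesssim R^{-a}$. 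Therefore $\sup_{0<t\lesssim R^{-a}}|\rate f_k(x,t)|\lesssim R^{-a(1-\de)}R^a M f_k(x)=R^{a\de}Mf_k(x)$, whose $L^2(B)$-norm is $\lesssim R^{a\de}\norm{f_k}_{L^2}\le R^{\max\{1/4,a\de\}}\norm{f_k}_{L^2}$.

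For large times I decompose dyadically, $t\sim\la:=2^{-j}$ with $R^{-a}\lesssim\la\lesssim 1$, and use $\rate f_k(\cdot,t)=t^{-\de}(\exponent f_k-f_k)$ together with $t^{-\de}\sim\la^{-\de}$ to get $\sup_{t\sim\la}|\rate f_k(x,t)|\lesssim\la^{-\de}\big(\sup_{t\sim\la}|\exponent f_k(x)|+|f_k(x)|\big)$; summing $\la^{-\de}|f_k|$ over $j$ with $2^{-j}\gtrsim R^{-a}$ contributes again $\lesssim R^{a\de}\norm{f_k}_{L^2}$ in $L^2(B)$. The heart of the matter is the time-localized, frequency-localized maximal estimate
\begin{equation*}
\Big\|\sup_{t\sim\la}|\exponent g|\Big\|_{L^2(B)}\lesssim_{\ep}(\la R^a)^{1/(4a)}R^{\ep}\norm{g}_{L^2},\qquad R^{-a}\le\la\le1,\ \widehat g\ \text{supported in}\ \{|\xi|\sim R\}.
\end{equation*}
I would derive it by the rescaling $h(y):=g(\la^{1/a}y)$: then $\widehat h$ is supported in $\{|\eta|\sim\nu\}$ with $\nu:=R\la^{1/a}\in[1,R]$, the identity $\exponent g(x)=\big(e^{i(t/\la)(-\Delta)^{a/2}}h\big)(x/\la^{1/a})$ turns $t\sim\la$ into $s\sim1$, and a change of variables gives $\big\|\sup_{t\sim\la}|\exponent g|\big\|_{L^2(B)}=\la^{1/(2a)}\big\|\sup_{s\sim1}|e^{is(-\Delta)^{a/2}}h|\big\|_{L^2(B_{\la^{-1/a}})}$ with $\la^{-1/a}\le R$. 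Invoking the classical one-dimensional maximal estimate for $S^a$ with $a>1$ (sharp at regularity $1/4$, due to Sj\"{o}lin), which over a ball of radius $\le R$ holds with constant $\nu^{1/4}$ up to a factor $R^{\ep}$, and undoing the scaling ($\norm{h}_{L^2}=\la^{-1/(2a)}\norm{g}_{L^2}$), produces the claimed bound with $(\la R^a)^{1/(4a)}=\nu^{1/4}$. Summing, $\sum_{j:\,2^{-j}\gtrsim R^{-a}}2^{j\de}(2^{-j}R^a)^{1/(4a)}=R^{1/4}\sum_{0\le j\lesssim\log R}2^{j(\de-\frac1{4a})}\lesssim_{\ep}R^{\max\{1/4,\,a\de\}+\ep}$, handling each of $\de\lessgtr\frac1{4a}$ as a geometric sum.

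Combining the small-time and large-time regimes yields the frequency-localized bound displayed above, and the summation in $k$ then completes the proof. The main obstacle is the time-localized maximal estimate in the large-time regime: after rescaling it requires the classical one-dimensional maximal estimate for $S^a$ on balls of radius up to $R$ with only an $\ep$-power loss in the radius, which is where all the dispersive (oscillatory) information enters. (By contrast, the crude bound $t^{-\de}\le R^{a\de}$ together with the full-interval maximal estimate gives only $s>a\de+\tfrac14$, the range of \cite{CFW2018rate}; the gain comes precisely from the dyadic-in-time decomposition.) If one prefers to avoid the rescaling, the same estimate can be obtained directly by linearizing $\sup_{t\sim\la}$ in the Kolmogorov--Seliverstov--Plessner manner and running a $TT^*$ argument with van der Corput's lemma on the oscillatory kernel---keeping the factor $e^{it(x)|\xi|^a}-1$ intact so as to retain the cancellation essential for small $t$; this computation is the technical core.
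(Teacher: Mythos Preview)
There is a genuine gap in the large-time regime. Your claimed time-localized estimate
\[
\Big\|\sup_{t\sim\la}\big|\exponent g\big|\Big\|_{L^2(B)}\lesssim_{\ep}(\la R^a)^{1/(4a)}R^{\ep}\norm{g}_{L^2},\qquad \supp\widehat g\subset\{|\xi|\sim R\},
\]
is false for $a>1$. After your (correct) parabolic rescaling you must control $\big\|\sup_{s\sim 1}|S^ah|\big\|_{L^2(B_{\la^{-1/a}})}$ with $\widehat h$ supported at frequency $\nu=R\la^{1/a}$; the assertion that Sj\"olin's bound holds on this large ball with constant $\nu^{1/4}$ and only an $R^\ep$ loss from the radius is precisely what fails. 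The counterexample of Section~\ref{negative} with $\ep=1$ already disproves it: the frequency is $R=\la^2$, and for every $x$ in the fixed interval $[-c_0,-c_0/2]$ one finds $t\sim R^{1-a}$ with $|S^af(x,t)|\gtrsim 1$, so $\big\|\sup_{t\sim R^{1-a}}|S^a f|\big\|_{L^2(B)}\gtrsim 1$ while $\norm{f}_{L^2}\sim R^{-1/4}$; plugging $\la=R^{1-a}$ into your bound would force $R^{1/4}\lesssim R^{1/(4a)+\ep}$, impossible for small $\ep$. More decisively, your frequency-localized exponent $\max\{1/4,a\de\}$ is strictly below the sharp threshold $(a-1)\de+\max\{\de,1/4\}$ for every $0<\de<1/4$, so your conclusion would contradict the necessity part of Theorem~\ref{thm3}.

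If you replace $1/(4a)$ by the correct exponent $1/4$ (this is exactly Lemma~\ref{dimou}), your dyadic-in-time sum over $R^{-a}\lesssim\la\lesssim 1$ gives $R^{\max\{a/4,a\de\}}$, which is now too weak for $\de<1/4$. The fix is the additional split the paper makes: for $t\ge R^{1-a}$ one uses the genuinely \emph{local} estimate $\big\|\sup_{0<t<1}|S^af|\big\|_{L^2(B)}\lesssim R^{1/4}\norm{f}_{L^2}$ (Lemma~\ref{sharp local lemma}) together with the crude bound $t^{-\de}\le R^{(a-1)\de}$, contributing $R^{(a-1)\de+1/4}$; the Dimou--Seeger bound is applied only on the intermediate window $R^{-a}\le t\le R^{1-a}$, where the dyadic sum $\sum \la^{-\de}(\la R^a)^{1/4}$ now lands on $R^{(a-1)\de+\max\{\de,1/4\}}$. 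Your low-frequency treatment and the small-time ($t\lesssim R^{-a}$) argument are fine; it is the missing cut at $t\sim R^{1-a}$, separating the local from the global maximal input, that makes the difference.
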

\begin{proof}
	By Littlewood-Paley decomposition,
	\begin{equation*}
		f=\sum_{k=0}^{+\infty}f_k,
	\end{equation*}
	where $ \hat{f_0} $ is supported in $ B(0,1) $ and $\hat{f_k}  $ is supported in $ A(2^k) $ for $ k\geq 1 $. We claim that
	\begin{equation}
		\Big\|\sup_{0<t<1}|R_\de^af_0(\cdot,t)|\Big\|_{L^2(B)}\leq C\norm{f}_{L^2(\R)}.\label{lowfrequency}
	\end{equation}
	Without loss of generality, we assume $ \de>\frac{1}{2} $. Otherwise, we can replace $ \de $ by some $ \de'>\frac{1}{2} $ since $ |\rate f(x,t)|\leq |R_{\de'}^a f(x,t)| $ when $ t\in (0,1) $. Applying Theorem \ref{thm1Cowling} with $ H=(-\De)^{\frac{a}{2}} $, we obtain that 
	\begin{equation*}
			\Big\|\sup_{t\in \R}|R_\de^af_0(\cdot,t)|\Big\|_{L^2(\R)}\leq C\norm{f_0}_{H^{a\de}(\R)}.
	\end{equation*}
	Note that $ \hat{f_0} $ is supported in $ B(0,1) $. Thus,  the $ H^{a\de}(\R) $ norm of $ f_0 $ is controlled by the $ L^2(\R) $ norm of $ f $, which proves (\ref{lowfrequency}). To get (\ref{maximal estimate}), by Minkowski's inequality and direct summation, it is sufficient to show the following frequency-localized estimate.
\end{proof}

\begin{prop}\label{freqlocalized}
	If $ a>1 $ and $ 0\leq \de<1 $, then for $ k\geq 1 $ and $ s=(a-1)\de+\max\{\de,\frac{1}{4}\} $, we have 
	\begin{equation}
		\Big\|\sup_{0<t<1}|R_\de^af(\cdot,t)|\Big\|_{L^2(B)}\leq Ck2^{ks} \norm{f}_{L^2(\R)}\label{localized},
	\end{equation} 
	whenever $ \supp \hat{f}\subset A(2^k) $. Moreover, the factor $  k $ can be removed when $ \de\neq \frac{1}{4}$.
\end{prop}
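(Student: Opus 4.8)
The plan is to reduce the maximal estimate to a standard $L^2$ oscillatory-integral (or $TT^*$) argument after rescaling the frequency annulus $A(2^k)$ to the unit annulus. Write $\widehat{f}$ supported in $A(2^k)$ and set $\xi = 2^k\eta$ with $|\eta|\sim 1$; then $R_\de^a f(x,t)$ becomes, up to the harmless factor $2^{k(a-1)\de}$ coming from $|t|^{-\de}$ absorbed into a rescaled time variable $\tau = 2^{ka}t \in (0, 2^{ka})$, an operator of the form $\tau^{-\de}\big(e^{i\tau(-\Delta)^{a/2}}g - g\big)$ acting on $g$ with $\widehat{g}$ on the unit annulus, evaluated on a ball of radius $\sim 2^k$ in $x$. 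So it suffices to prove, for $\widehat{g}$ supported on $|\eta|\sim 1$,
\begin{equation*}
	\Big\|\sup_{0<\tau<T}\big|\tau^{-\de}(e^{i\tau(-\Delta)^{a/2}}g-g)\big|\Big\|_{L^2(|x|\lesssim 2^k)} \lesssim k\, 2^{k\max\{\de,1/4\}}\,\|g\|_{L^2},
\end{equation*}
with $T = 2^{ka}$, which after undoing the rescaling gives \eqref{localized}. The $2^{k\max\{\de,1/4\}}$ splits into two regimes: the contribution of $\tau \lesssim 1$ (equivalently $t \lesssim 2^{-ka}$), where the difference $e^{i\tau(-\Delta)^{a/2}}g - g$ is $O(\tau)$ by the mean value theorem in $\tau$ and the frequency localization, giving a gain that beats $\tau^{-\de}$ for $\de<1$; and the contribution of $1 \lesssim \tau < T$, where one discards the "$-g$" term trivially and is left with the genuine maximal operator $\sup_\tau |e^{i\tau(-\Delta)^{a/2}}g|$ on a ball of radius $2^k$, which by the one-dimensional local smoothing / Kenig–Ponce–Vega type estimate (or directly Sobolev embedding in the $\tau$ variable combined with the $L^4$ Strichartz/Stein–Tomas bound in 1D) costs exactly $2^{k/4}$ on the unit annulus — this is where the $1/4$ and the logarithmic loss $k$ enter, the loss being removable (via a Littlewood–Paley square-function refinement in the time frequency, summing a geometric rather than harmonic series) precisely when $\de\ne 1/4$ so that the two regimes do not share the same exponent.

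More concretely, I would proceed as follows. First, decompose the time interval $(0,1)$ dyadically as $\bigcup_j I_j$ with $I_j = \{t \sim 2^{-j}\}$, $0 \le j \lesssim ka$; on each $I_j$ the factor $|t|^{-\de} \sim 2^{j\de}$. Second, on the range $2^{-j} \lesssim 2^{-ka}$ (i.e. $2^{ja}\lesssim 2^{k(a-1)}\cdot 2^k$, roughly $j \gtrsim k$ after accounting for the frequency), use $|e^{it(-\Delta)^{a/2}}f - f| \lesssim |t|\,|\,\widehat{f}\,|$-type control: writing $e^{it|\xi|^a}-1 = it|\xi|^a\int_0^1 e^{i\theta t|\xi|^a}d\theta$ and $|\xi|\sim 2^k$, Minkowski and Plancherel give $\|\sup_{t\in I_j}|R_\de^a f(\cdot,t)|\|_{L^2} \lesssim 2^{j\de} 2^{-j} 2^{ka}\|f\|_2$, and summing over the relevant $j$ yields a bound $\lesssim 2^{ka(1-\de)}\cdot(\text{something}) $; care must be taken that this matches $2^{ks}$ with $s = (a-1)\de + \max\{\de,1/4\}$, which works because on this short-time range the exponent balances at $\de$. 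Third, on the long-time range, reduce $\sup_{t \in I_j}|e^{it(-\Delta)^{a/2}}f|$ to a fixed-time bound plus a derivative-in-$t$ term by the fundamental theorem of calculus (the usual $\|\sup_t |F(t)|\|_{L^2_x}^2 \lesssim \|F\|_{L^2_{x}L^2_t}\|\partial_t F\|_{L^2_x L^2_t} + \|F(t_0)\|^2$ trick on the interval $I_j$ of length $2^{-j}$), and on each such piece invoke the one-dimensional $L^4$ extension estimate $\|e^{it(-\Delta)^{a/2}}g\|_{L^4_{x,t}(\text{unit scale})} \lesssim \|g\|_2$ for $\widehat g$ on the unit annulus (valid for all $a>1$ since the phase $|\xi|^a$ has non-vanishing curvature there), together with Hölder on the ball of radius $2^k$ in $x$, producing the factor $2^{k/4}$. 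Summing the $\lesssim k$ dyadic time pieces (each contributing $O(2^{k/4})$ after the time-frequency localization is done crudely) gives the stated $k\,2^{k/4}$; the refinement removing $k$ comes from doing the time-frequency decomposition honestly so that the pieces are summed in $\ell^2$ rather than $\ell^1$.

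The main obstacle is the bookkeeping at the transition between the two time regimes and verifying that the exponent is exactly $\max\{\de,1/4\}$ rather than something worse: one must check that the short-time (mean-value) contribution, which naturally carries $\de$, and the long-time (maximal/$L^4$) contribution, which naturally carries $1/4$, are the only two competing scales and that no intermediate dyadic block $I_j$ produces a larger power of $2^k$. When $\de > 1/4$ the short-time regime dominates and the geometric sum there converges cleanly (no $k$ loss); when $\de < 1/4$ the long-time $L^4$ regime dominates and the honest square-function argument again avoids the $k$ loss; exactly at $\de = 1/4$ the two contributions are of the same size $2^{k/4}$ across $\sim k$ scales and the $\ell^1$ sum genuinely costs a factor $k$, which is why the logarithm is present only there. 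A secondary technical point is justifying the $L^4$ extension/Strichartz estimate uniformly for the relevant range of $a$ with constants independent of $k$ after rescaling — this is standard for $a>1$ because the rescaled phase stays uniformly curved on the unit annulus, but it should be stated carefully.
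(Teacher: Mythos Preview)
Your overall architecture---dyadic time decomposition, a Taylor/mean-value bound for $t\lesssim 2^{-ak}$, a maximal-type estimate for larger $t$, and a geometric sum that degenerates to a harmonic one exactly at $\de=1/4$---is the right one and matches the paper. The gap is in the tools you propose for the ``long-time'' regime, and in the coarseness of the two-regime split.

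First, the paper does \emph{not} lump all of $t\in(2^{-ak},1)$ together; it splits this into an intermediate zone $I_2=\{2^{-ak}<t<2^{(1-a)k}\}$ and a large-time zone $I_3=\{2^{(1-a)k}\le t<1\}$, using \emph{different} lemmas on each. On $I_3$ the factor $t^{-\de}$ is at most $2^{(a-1)k\de}$ and the paper invokes the classical local maximal estimate $\|\sup_{0<t<1}|S^af|\|_{L^2(B)}\lesssim \|f\|_{H^{1/4}}$ (Kenig--Ruiz/Sj\"olin). On $I_2$ the paper uses the Dimou--Seeger lemma
\[
\Big\|\sup_{t\in J}|S^af(\cdot,t)|\Big\|_{L^2(\R)}\lesssim (1+|J|^{1/4}2^{ka/4})\|f\|_{L^2},
\]
whose $|J|^{1/4}$ dependence is exactly what makes the dyadic sum over $J_l=[2^{-l},2^{-l+1}]$ come out as $\sum_l 2^{l(\de-1/4)}2^{ka/4}$. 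Your proposed FTC trick (even combined with $L^4$ and H\"older on the spatial ball) does not produce this sharp $|J|^{1/4}$: the FTC argument with Plancherel gives $(1+|J|^{1/2}2^{ka/2})$, and routing through $L^4$ plus H\"older on $B(0,2^k)$ gives roughly $2^{k/4}|J|^{1/4}$ after rescaling---both are off by a factor $2^{k/4}$ in the regimes that matter. If you sum your bound over all $0\le l\le ak$ you will find, for instance when $1/4<\de<1/2$, a final exponent $(a-1)\de+1/2$ instead of $a\de$, and when $\de<1/4$ the piece near $t\sim 1$ alone contributes $2^{ka/4}$, which exceeds $2^{ks}$.

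Second, two smaller points: the scaling factor pulled out of $|t|^{-\de}$ is $2^{ka\de}$, not $2^{k(a-1)\de}$; and the removal of the factor $k$ for $\de\neq 1/4$ is not an $\ell^2$-versus-$\ell^1$ refinement but simply that $\sum_{(a-1)k\le l\le ak}2^{l(\de-1/4)}$ is a genuinely geometric series once $\de\neq 1/4$. So the fix is to replace your FTC/$L^4$ step by a direct appeal to the Dimou--Seeger interval estimate, and to treat $t\gtrsim 2^{(1-a)k}$ separately using the local $L^2(B)$ maximal bound.
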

We recall two important maximal estimates which will be used in the proof of Proposition \ref{freqlocalized}.
\begin{lem}[\cite{1983KenigRuiz}, \cite{sjolin1987regularity}]\label{sharp local lemma}
	If $ n=1$ and $ a>1 $, then 
	\begin{equation}
		\Big\|\sup_{0<t<1}|S^af(\cdot,t)|\Big\|_{L^2(B)}\leq C\norm{f}_{H^{\frac{1}{4}}(\R)}.\label{local estimate}
	\end{equation}
\end{lem}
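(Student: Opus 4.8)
<br>

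The plan is to establish \eqref{local estimate} through its standard linearized form. First I would fix an arbitrary measurable function $t\colon B\to(0,1)$ and set
\[
T_tf(x)=S^af(x,t(x))=\int_\R e^{i(x\xi+t(x)|\xi|^a)}\hat f(\xi)\,d\xi .
\]
A routine measurable-selection and density argument (realize $\sup_{0<t<1}|S^af(x,t)|$ as a countable supremum using continuity of $t\mapsto S^af(x,t)$ for Schwartz $f$, then pass to general $f\in H^{1/4}(\R)$ by approximation) reduces \eqref{local estimate} to the bound $\|T_tf\|_{L^2(B)}\le C\|f\|_{H^{1/4}(\R)}$ with $C$ independent of $t(\cdot)$. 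Factoring $T_t=\tilde T_t\langle D\rangle^{1/4}$ with $\tilde T_t=T_t\langle D\rangle^{-1/4}$, this amounts to $\|\tilde T_t\|_{L^2(\R)\to L^2(B)}\le C$, and the $TT^{*}$ method turns it into an $L^2(B)$ bound for the integral operator with kernel
\[
\tilde K(x,y)=\int_\R e^{i\Phi(x,y,\xi)}\langle\xi\rangle^{-1/2}\,d\xi,\qquad \Phi(x,y,\xi)=(x-y)\xi+\bigl(t(x)-t(y)\bigr)|\xi|^a .
\]
I claim $|\tilde K(x,y)|\lesssim|x-y|^{-1/2}$ for $x,y\in B$; granting this, Schur's test finishes the proof since $\sup_{x\in B}\int_B|x-y|^{-1/2}\,dy<\infty$.

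The heart of the argument is this kernel bound, and it is where essentially all the work is. I would split $\langle\xi\rangle^{-1/2}$ dyadically, $1=\psi_0(\xi)+\sum_{\lambda\ge1}\psi_\lambda(\xi)$ with $\psi_\lambda$ supported in $|\xi|\sim\lambda$; the $\psi_0$ contribution is $O(1)$, which is $\lesssim|x-y|^{-1/2}$ because $|x-y|\le2$ on $B$. For a dyadic piece, write $r=|x-y|$ and $\tau=|t(x)-t(y)|$; on $\supp\psi_\lambda$ one has $|\partial_\xi^2\Phi|\sim\tau\lambda^{a-2}$ and $|\partial_\xi\Phi|\ge r-C\tau\lambda^{a-1}$. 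Setting $\lambda_0\sim(r/\tau)^{1/(a-1)}$ (the constant chosen so that $|\partial_\xi\Phi|\gtrsim r$ when $\lambda\lesssim\lambda_0$), I would treat the non-stationary range $\lambda\lesssim\lambda_0$ and the stationary range $\lambda\gtrsim\lambda_0$ separately. In the non-stationary range, one integration by parts, together with $\int_{|\xi|\sim\lambda}|\partial_\xi^2\Phi|\,d\xi\sim\tau\lambda^{a-1}\lesssim r$, gives $\bigl|\int e^{i\Phi}\psi_\lambda\,d\xi\bigr|\lesssim\min(\lambda,r^{-1})$. In the stationary range, van der Corput's lemma (second-derivative version, no monotonicity needed) gives $\bigl|\int e^{i\Phi}\psi_\lambda\,d\xi\bigr|\lesssim(\tau\lambda^{a-2})^{-1/2}$. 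Inserting the weight $\lambda^{-1/2}$ and summing dyadically, the non-stationary range contributes $\sum_{1\le\lambda\lesssim\min(\lambda_0,r^{-1})}\lambda^{1/2}+r^{-1}\!\!\sum_{r^{-1}<\lambda\lesssim\lambda_0}\lambda^{-1/2}\lesssim r^{-1/2}$, while the stationary range contributes $\tau^{-1/2}\sum_{\lambda\gtrsim\max(\lambda_0,1)}\lambda^{(1-a)/2}\lesssim\tau^{-1/2}\max(\lambda_0,1)^{(1-a)/2}\lesssim r^{-1/2}$, where I use $a>1$ so that the exponent $(1-a)/2$ is negative, the series is geometric, and $\lambda_0^{(1-a)/2}\sim(\tau/r)^{1/2}$. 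Adding the two gives $|\tilde K(x,y)|\lesssim|x-y|^{-1/2}$.

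I expect the main obstacle to be exactly this uniform kernel estimate: one must verify that the non-stationary and stationary frequency regimes sum, with no logarithmic or power loss, to precisely the singularity $|x-y|^{-1/2}$, and that the borderline weight $\langle\xi\rangle^{-1/2}$ — which encodes the critical regularity $s=1/4$ — is exactly what balances both geometric series. The hypothesis $a>1$ enters twice in an essential way: it guarantees that $\partial_\xi^2\Phi$ does not vanish identically (so van der Corput applies in the stationary regime) and it makes the high-frequency sum geometrically convergent; both break down at $a=1$, consistent with \eqref{local estimate} being false there. The remaining ingredients — the $TT^{*}$ reduction, Schur's test, and the linearization — are standard, and indeed \eqref{local estimate} is the classical maximal estimate of Kenig–Ruiz and Sjölin \cite{1983KenigRuiz,sjolin1987regularity}.
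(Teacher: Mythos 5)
Your proof is correct and coincides with the classical argument behind the cited result: the paper itself does not prove Lemma \ref{sharp local lemma} but quotes it from \cite{1983KenigRuiz,sjolin1987regularity}, whose proofs proceed by exactly your linearization--$TT^{*}$ reduction and the uniform kernel bound $|\tilde K(x,y)|\lesssim |x-y|^{-1/2}$, with van der Corput in the stationary range and integration by parts in the non-stationary range, followed by Schur's test. Your dyadic bookkeeping (including the degenerate cases $t(x)=t(y)$ and $\lambda_0<1$, and the use of $a>1$ for the geometric sums) checks out, so I see no gap.
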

This sharp local estimate  was first  proven by Kenig and Ruiz \cite[Theorem 1]{1983KenigRuiz} when $ a=2 $ and later generalized to the case $ a>1 $ by Sj\"{o}lin \cite[Theorem 3]{sjolin1987regularity}. We also need a variant of the above inequality, which was established by Dimou and Seeger \cite{2020DimouSeeger}.
\begin{lem}[\cite{2020DimouSeeger}, Proposition 2.1]\label{dimou}
	If $ J\subset [0,1] $ is an interval and $ 0<a\neq 1 $, then 
	\begin{equation}
		\Big\|\sup_{t\in J}|S^af(\cdot,t)|\Big\|_{L^2(\R)}\leq C(1+|J|^{\frac{1}{4}}2^{\frac{ka}{4}})\norm{f}_{L^2(\R)}
	\end{equation}
	whenever $ \supp \hat{f}\subset A(2^k) $.
\end{lem}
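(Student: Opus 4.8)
\textbf{Proof plan for Proposition \ref{freqlocalized}.}

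The plan is to split the supremum over $t\in(0,1)$ dyadically and, on each dyadic block where $t\sim 2^{-j}$, rewrite $R_\de^a f$ in terms of $S^a$, exploiting the gain coming from the factor $|t|^{-\de}$ together with the numerator $S^a f - f$ vanishing as $t\to 0$. Concretely, write $(0,1)=\bigcup_{j\geq 0} I_j$ with $I_j=(2^{-j-1},2^{-j}]$, so that
\begin{equation*}
	\Big\|\sup_{0<t<1}|R_\de^a f(\cdot,t)|\Big\|_{L^2(B)}
	\leq \Big(\sum_{j\geq 0}\Big\|\sup_{t\in I_j}|R_\de^a f(\cdot,t)|\Big\|_{L^2(B)}^2\Big)^{1/2},
\end{equation*}
and on $I_j$ estimate $|R_\de^a f(x,t)|\lesssim 2^{j\de}\big(|S^a f(x,t)|+|f(x)|\big)$. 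The contribution of the $|f(x)|$ term is harmless after summing the geometric factors $2^{j\de}$ against a decay to be extracted, so the main point is the $S^a$ term. Here one must distinguish the regime $2^{-j}\gtrsim 2^{-ka}$ (long-time, where the dispersive phase $t|\xi|^a$ oscillates) from $2^{-j}\lesssim 2^{-ka}$ (short-time, where $S^a f$ is essentially $f$ and the difference $S^a f - f$ is genuinely small of order $2^{-j}2^{ka}$).

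In the long-time regime I would apply Lemma \ref{dimou} with $J=I_j$, giving $\|\sup_{t\in I_j}|S^a f|\|_{L^2(\R)}\lesssim (1+|I_j|^{1/4}2^{ka/4})\|f\|_{L^2(\R)}\lesssim 2^{-j/4}2^{ka/4}\|f\|_{L^2(\R)}$ whenever $2^{-j}2^{ka}\gtrsim 1$; combined with the factor $2^{j\de}$ this yields a bound $\lesssim 2^{j(\de-1/4)}2^{ka/4}\|f\|_{L^2}$ per block. When $\de<1/4$ this is summable in $j$ and produces the exponent $ka/4$. When $\de>1/4$ the sum is dominated by the smallest admissible $j$, namely $2^{-j}\sim 2^{-ka}$, giving $2^{ka(\de-1/4)}2^{ka/4}=2^{ka\de}$; but one also needs the local estimate Lemma \ref{sharp local lemma} to handle the block $t\sim 1$ without losing a power of $2^k$ beyond $2^{k/4}$, and then interpolate/combine the $2^{k/4}$ local bound with the short-time gain. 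In the borderline case $\de=1/4$ every block in the long-time regime contributes comparably, and there are $\sim k$ such blocks, which is exactly where the logarithmic factor $k$ in \eqref{localized} comes from; this also explains why it can be removed once $\de\neq 1/4$.

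In the short-time regime $2^{-j}2^{ka}\lesssim 1$ I would not use oscillation at all: instead write $S^a f(x,t)-f(x)=\int \big(e^{it|\xi|^a}-1\big)e^{ix\xi}\hat f(\xi)\,d\xi$ and use $|e^{it|\xi|^a}-1|\lesssim t|\xi|^a\lesssim 2^{-j}2^{ka}$ on the support of $\hat f$, so that by Plancherel $\sup_{t\in I_j}|t|^{-\de}\|S^a f(\cdot,t)-f\|_{L^2(\R)}\lesssim 2^{j\de}2^{-j}2^{ka}\|f\|_{L^2}=2^{-j(1-\de)}2^{ka}\|f\|_{L^2}$; summing over $j$ with $2^{-j}\lesssim 2^{-ka}$ gives $\lesssim 2^{-ka(1-\de)}2^{ka}\|f\|_{L^2}=2^{ka\de}\|f\|_{L^2}$. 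Finally one reconciles the two regimes: since $s=(a-1)\de+\max\{\de,1/4\}$, the exponent $ka\de$ from the short-time part equals $k\big((a-1)\de+\de\big)\leq ks$, and the long-time exponent is $\max\{ka/4,\,ka\de\}$ which, after accounting for the crude bound $\|\sup|R_\de^a f|\|_{L^2(B)}\lesssim 2^{j\de}\|S^a f\|$ being replaced by the \emph{difference} at the junction $t\sim 2^{-ka}$, must be matched to $k s$ as well; here is the delicate bookkeeping. \textbf{The main obstacle} is precisely this matching: one must be careful that the transition between the short-time (no-oscillation) estimate and the long-time (Lemma \ref{dimou}) estimate is done at the right scale $t\sim 2^{-ka}$ and that the extra factor $2^{(a-1)k\de}$ — which appears because $\de$ is measured against $|t|$ while the natural frequency-localized time scale is $2^{-ka}$, not $2^{-k}$ — is correctly extracted, giving $(a-1)\de$ rather than $a\de$ or $\de$ in the final exponent. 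I expect the case $\de\leq 1/4$ versus $\de>1/4$ to require separate treatment of the $t\sim 1$ block via Lemma \ref{sharp local lemma}, and the $\de=1/4$ case to be where the loss of $k$ is genuinely unavoidable.
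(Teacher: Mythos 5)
The statement is Lemma \ref{dimou} (Dimou--Seeger), namely the frequency-localized maximal bound $\bigl\|\sup_{t\in J}|S^af(\cdot,t)|\bigr\|_{L^2(\R)}\leq C\bigl(1+|J|^{\frac14}2^{\frac{ka}{4}}\bigr)\|f\|_{L^2(\R)}$ for $\supp\hat f\subset A(2^k)$. What you wrote is a proof plan for Proposition \ref{freqlocalized}, and in its ``long-time regime'' it \emph{invokes} Lemma \ref{dimou} as a black box. As an argument for the lemma itself this is circular: nowhere do you produce the actual analytic content, which is the gain $(|J|\,2^{ka})^{1/4}$ on short time intervals. That gain cannot be obtained by the soft devices appearing in your sketch. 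For instance, the standard Sobolev-embedding-in-time argument (write $\sup_{t\in J}|F(t)|^2\le |F(t_0)|^2+2\|F\|_{L^2_t(J)}\|\partial_tF\|_{L^2_t(J)}$, integrate in $x$, use $\partial_tS^af=i(-\Delta)^{a/2}S^af\sim 2^{ka}S^af$ on $A(2^k)$ and Plancherel) only yields the exponent $1/2$, i.e. $\bigl(1+(|J|2^{ka})^{1/2}\bigr)\|f\|_2$; since the lemma is applied on blocks where $|J|2^{ka}\gtrsim1$, this weaker power would degrade the final regularity threshold (it would force roughly $s\geq(a-1)\de+1/2$ instead of $(a-1)\de+\max\{\de,\tfrac14\}$ for small $\de$). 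A genuine proof of the $1/4$ power requires exploiting dispersion of the phase $t|\xi|^a$ --- e.g. a $TT^*$/kernel argument with van der Corput or a rescaling to the sharp global $\dot H^{a/4}$ maximal estimate --- and none of that is present in your plan. Note that the paper itself does not prove this lemma either; it imports it from \cite{2020DimouSeeger}, so there is no internal proof to fall back on.

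For what it is worth, read as a sketch of Proposition \ref{freqlocalized} your plan follows essentially the paper's own route in Section \ref{positive}: split at the scales $2^{-ak}$ and (for $a>1$) $2^{(1-a)k}$, use a size/Taylor estimate with Plancherel in the short-time regime, dyadic time blocks plus Lemma \ref{dimou} in the intermediate regime, Lemma \ref{sharp local lemma} near $t\sim1$, with the logarithmic loss $k$ occurring exactly at $\de=\tfrac14$ because all $\sim k$ intermediate blocks contribute equally. But that is a different statement, and establishing it does not discharge the lemma you were asked to prove.
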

Now let us turn to prove Proposition \ref{freqlocalized}. The key idea is  to decompose the interval $ (0,1) $ according to the localized frequency.
\begin{proof}[Proof of Proposition \ref{freqlocalized}]
	It is clear that
	\begin{equation*}
		\sup_{0<t<1}|R^a_\de f(x,t)|\leq I_1(x)+I_2(x)+I_3(x),
	\end{equation*}
	where
	\begin{align*}
		I_1(x)&=\sup_{0<t\leq 2^{-ak}}|R^a_\de f(x,t)|,\\
		I_2(x)&=\sup_{2^{-ak}<t<2^{(1-a)k}}|R^a_\de f(x,t)|,\\
		I_3(x)&=\sup_{2^{(1-a)k}\leq t<1}|R^a_\de f(x,t)|.\\
	\end{align*}
	We first give the upper bound for $ \norm{I_3}_{L^2(B)} $. Note that
	\begin{equation*}
	I_3(x)\leq 2^{(a-1)k\de}\big(\sup_{0<t<1}|S^a f(x,t)|+|f(x)|\big).
	\end{equation*}
	We invoke Lemma \ref{sharp local lemma} to obtain that
	\begin{align*}
		\norm{I_3}_{L^2(B)}&\leq  2^{(a-1)k\de} \Big(\Big\| \sup_{0<t<1}|S^a f(x,t)|\Big\|_{L^2(B)}+\norm{f}_{L^2(B)}\Big)\\
		                   &\lesssim  2^{(a-1)k\de}\norm{f}_{H^{\frac{1}{4}}(\R)}\\
		                   &\lesssim 2^{ks}\norm{f}_{L^2(\R)}.
	\end{align*}
	In order to estimate $ I_1(x) $, we use Taylor's expansion 
	\begin{equation*}
	e^{it|\xi|^a}-1=\sum_{j=1}^{+\infty}\frac{(it|\xi|^a)^j}{j!}.
	\end{equation*}
	Thus we can then write 
	\begin{align*}
		R_\de^a f(x,t)&=\frac{1}{2\pi}t^{-\de}\int_{\R}e^{ix\xi}(	e^{it|\xi|^a}-1)\hat{f}(\xi)d\xi\\
		              &=\frac{1}{2\pi}\sum_{j=1}^{+\infty}\frac{i^jt^{j-\de}}{j!}\int_{\R}e^{ix\xi}|\xi|^{aj}\hat{f}(\xi)d\xi.
	\end{align*}
	Since $ \de<1 $, by triangle inequality, we get
	\begin{equation*}
		I_1(x)=\sup_{0<t\leq 2^{-ak}}|R^a_\de f(x,t)|\leq \sum_{j=1}^{+\infty}\frac{2^{-ak(j-\de)}}{j!}|(-\De)^{\frac{ja}{2}}f(x)|.
	\end{equation*}
	It follows from Plancherel's theorem and the  Taylor's expansion of exponential function $ e^x $ that
	\begin{align*}
		\norm{I_1}_{L^2(\R)}&\leq \sum_{j=1}^{+\infty}\frac{2^{-ak(j-\de)}}{j!}(C2^k)^{ja}\norm{f}_{L^2(\R)}\\
		                    &=2^{ak\de}\sum_{j=1}^{+\infty}\frac{C^{ja}}{j!}\norm{f}_{L^2(\R)}\\
		                    &\leq2^{ak\de}\exp(C^a)\norm{f}_{L^2(\R)}\\
		                    &\lesssim 2^{ks}\norm{f}_{L^2(\R)}.
	\end{align*}
	For the term $I_2(x)$,  we use dyadic decomposition towards interval $ (2^{-ak},2^{(1-a)k}) $ so that 
	$$ I_2(x)\leq \sup_{\substack{l\in\N,\\(a-1)k\leq l\leq ak+1}}\sup_{t\in J_l}|R_\de^a f(x,t)|,$$
	where $ J_l=[2^{-l},2^{-l+1}] $. Using the natural embedding $ l^1\hookrightarrow l^\infty $ , we have 
	$$ I_2(x)\leq C\sum_{(a-1)k\leq l\leq ak+1}2^{l\de}\big(\sup_{t\in J_l}|S^a f(x,t)|+|f(x)|\big).$$
	Applying Lemma \ref{dimou} with $ J=J_l $, we derive that
	\begin{align*}
		\norm{I_2}_{L^2(\R)}&\lesssim\sum_{(a-1)k\leq l\leq ak+1}2^{l\de}\Big( \Big\| \sup_{t\in J_l}|S^a f(.,t)| \Big\|_{L^2(\R)}+\norm{f}_{L^2(\R)}\Big)\\
		&\lesssim \sum_{(a-1)k\leq l\leq ak+1}2^{l\de}2^{-\frac{l}{4}}2^{\frac{ka}{4}}\norm{f}_{L^2(\R)}\\
		&\lesssim\begin{cases}
		2^{ks}\norm{f}_{L^2(\R)},&\text{if} \quad\de\neq \frac{1}{4},\\
		k2^{ks}\norm{f}_{L^2(\R)},&\text{if} \quad\de=\frac{1}{4}.
		\end{cases}
	\end{align*}
Combining all the estimates for  $\norm{I_i}_{L^2(B)}$, we finish the proof of  (\ref{localized}).
	
\end{proof}

\section{negative results for the case $a>1$}\label{negative}
In order to prove necessary conditions, we make use of Stein's maximal principle \cite[Chapter \Rmnum{10}, \S 3.4]{stein1993harmonic}.
\begin{prop}\label{maximal principle}
	 Let $ a>0 $, $ 0\leq \de<1 $ and assume that (\ref{equivalence}) holds for any $ f\in H^s(\R) $. Let $ \{t_n\}_{n=1}^\infty \subset(0,1)$ be a sequence  with $ \lim_{n\to\infty}t_n=0 $. Then there is a constant $ C $, independent of $ f\in H^s(\R) $ and $ \al>0 $, such that
	 \begin{equation*}
	 	 meas(\{x\in[-1,1]:\sup_{n}|\rate f(x,t_n)|>\al\})\leq C\al^{-2}\norm{f}^2_{H^s(\R)}
	 \end{equation*}
	 holds for any $ \al>0 $. Here we denote the Lebesgue measure of   $ A\subset\R $  by meas(A).
\end{prop}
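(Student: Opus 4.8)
The plan is to derive Proposition \ref{maximal principle} as a direct application of Stein's maximal principle for a family of linear operators indexed by a parameter. The key observation is that, although $\rate f(\cdot, t_n)$ involves the difference $\exponent f - f$, for each fixed $n$ the map $f \mapsto \rate f(\cdot, t_n)$ is a bounded linear operator on $H^s(\R)$ with bound $O(t_n^{-\de})$, since both $\exponent$ and the identity are bounded on $H^s$. So I would set $T_n f(x) = \rate f(x, t_n)$, regard $\{T_n\}$ as a family of operators from $H^s(\R)$ (a Hilbert space) to measurable functions on $[-1,1]$, and form the sublinear maximal operator $T^* f = \sup_n |T_n f|$.

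First I would recall the hypothesis: (\ref{equivalence}) says $\lim_{t\to 0}\rate f(x,t) = 0$ a.e.\ for every $f \in H^s(\R)$; in particular, along the sequence $t_n \to 0$, we have $T_n f(x) \to 0$ a.e., so $T^* f(x) < \infty$ a.e.\ for every $f \in H^s(\R)$. Next I would verify the structural hypotheses of Stein's principle as stated in \cite[Chapter X, \S 3.4]{stein1993harmonic}: the $T_n$ are linear and, for each fixed $n$, continuous from $H^s(\R)$ to (say) $L^2$ of a compact set, because of the crude bound just mentioned together with the fact that $\exponent$ commutes with $\langle D\rangle^s$ and is $L^2$-bounded, so that $\|T_n f\|_{L^2([-1,1])} \le \|T_n f\|_{L^2(\R)} \le 2 t_n^{-\de}\|f\|_{H^s}$. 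The group underlying the translation-invariance structure needed by Stein's theorem is the translation group on $\R^n$ (here $n=1$): the operators $\exponent$ are Fourier multipliers, hence commute with translations, and $H^s(\R)$ is translation-invariant with a translation-invariant norm, so the whole family $\{T_n\}$ commutes with translations. With these ingredients, Stein's maximal principle yields that $T^*$ is of weak type $(2,2)$, i.e.\ there is a constant $C$ independent of $f$ and $\al$ with $meas(\{x \in [-1,1] : T^* f(x) > \al\}) \le C\al^{-2}\|f\|_{H^s(\R)}^2$, which is exactly the claimed inequality.

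The main point requiring care — and the one I'd expect to be the only real obstacle — is checking that the precise form of Stein's principle in \cite{stein1993harmonic} applies here: that reference is stated for operators on $L^p(\R^n)$ or on a compact group, whereas our operators act on the Sobolev space $H^s(\R)$. The clean fix is to conjugate by $\langle D\rangle^{-s}$: set $g = \langle D\rangle^s f$, so $g \in L^2(\R)$ with $\|g\|_{L^2} = \|f\|_{H^s}$, and define $\widetilde T_n g = T_n(\langle D\rangle^{-s} g)$. Since $\exponent$ commutes with $\langle D\rangle^{-s}$, each $\widetilde T_n$ is a translation-invariant bounded linear operator on $L^2(\R)$ (into $L^2$ of a compact set), and $\sup_n |\widetilde T_n g|$ is finite a.e.\ by the hypothesis; Stein's principle in its standard $L^2(\R^n)$ form then gives the weak $(2,2)$ bound for $\sup_n|\widetilde T_n g|$ in terms of $\|g\|_{L^2} = \|f\|_{H^s}$, which is the assertion. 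I would also remark that the restriction of the measure to $[-1,1]$ causes no difficulty, since one can simply multiply by the indicator of $[-1,1]$ and a weak-type bound on $\R$ restricts trivially; alternatively one works on the torus after a harmless localization, exactly as in the standard derivation of necessary conditions for pointwise convergence (cf.\ the argument following \cite[Theorem 5]{sjolin1987regularity}).
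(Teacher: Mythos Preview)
Your proposal is correct and follows the same approach as the paper: the paper simply invokes Stein's maximal principle from \cite[Chapter \Rmnum{10}, \S 3.4]{stein1993harmonic} and omits the verification of hypotheses, whereas you spell them out (linearity and boundedness of each $T_n$, translation-invariance, a.e.\ finiteness of $T^*f$ from the hypothesis (\ref{equivalence})). Your conjugation by $\langle D\rangle^{-s}$ to pass from $H^s(\R)$ to $L^2(\R)$ is exactly the standard device needed to place the problem in the $L^2$ framework where Stein's theorem is stated, and your remark that the restriction to $[-1,1]$ is harmless once the weak-type bound holds on all of $\R$ is also correct; both points are implicit in the paper's one-line dismissal.
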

It is easy to verify that the above proposition satisfies the assumptions in Stein's maximal principle, so we omit the details of proof. However, since this maximal principle only  focuses on a countable family of linear operators, it seems plausible to replace the supremum over $ \{t_n\}_{n=1}^\infty $ by the supremum over $ t\in(0,1) $ or any other bounded interval.
\par With the help of the above proposition, we are in position to present an explicit counterexample, which is a variant of construction in  \cite{1981DK}.
\begin{proof}[Proof of negative results in Theorem \ref{thm3}]
	We set $ e(z)=e^{iz}$ for simplicity. Choose a nontrivial  smooth function $ \phi $ with compact support in $ [-1,1] $ such that $ 0\leq \phi\leq 1 $  in $ \R $ and $ \int \phi(\xi)d\xi=1$. We define $ f_\la^\ep $  with $ 0<\ep\leq 1 $ and $ \la $  sufficiently large by its Fourier transform
	$$\widehat{f_\la^\ep}(\xi)=\frac{1}{\la}\phi\Big(\frac{\xi-\la^{1+\ep}}{\la}\Big).$$ 
	It is clear that $  \hat{f_\la^\ep}(\xi)$ is supported in  the interval $ [\la^{1+\ep}-\la,\la^{1+\ep}+\la] $, which implies that
	\begin{equation}
		\norm{f_\la^\ep}_{H^s(\R)}\sim \la^{(1+\ep)s}\la^{-\frac{1}{2}}\label{Hs-norm}.
	\end{equation}
	\par Another key point is to choose a time sequence with an appropriate decay rate.  Take $ t_n=n^{-\ga} $ for all $ n\geq 1 $. For the case $a>1$, $ \ga=2(a-1) $ is enough to guarantee the sharp counterexample. 
	Change of variables gives that
	\begin{equation*}
		|S^af_\la^\ep(x,t_n)|=\frac{1}{2\pi}\Big|\int_{|\xi|\leq 1}e(\Phi_\la^\ep(\xi;x,t_n))\phi(\xi)d\xi\Big|,
	\end{equation*}
	where 
	$$\Phi_\la^\ep(\xi;x,t_n)=x(\la^{1+\ep}+\la\xi)+t_n\la^{a(1+\ep)}(1+\la^{-\ep}\xi)^a.$$
	
	It follows from  Taylor's expansion that 
	\begin{equation*}
		(1+\la^{-\ep}\xi)^a=1+a\la^{-\ep}\xi+\frac{a(a-1)}{2}\la^{-2\ep}\xi^2+O(\la^{-3\ep})
	\end{equation*}
	for all $ \xi\in\supp\phi $. Consequently, 
	\begin{equation}
	\Phi_\la^\ep(\xi;x,t_n)=\la^{1+\ep}x+\la^{a(1+\ep)}t_n+E_1(\xi;x,t_n,\la,\ep)+E_2(\xi;t_n,\la,\ep)+E_3(\xi;t_n,\la,\ep), \label{phase function}
	\end{equation}
	where 
	\begin{align*}
		E_1(\xi)&=\xi(\la x+at_n\la^{a(1+\ep)-\ep}),\\
		E_2(\xi)&=\frac{a(a-1)}{2}t_n\la^{a(1+\ep)-2\ep}\xi^2,\\
		E_3(\xi)&=t_nO(\la^{a(1+\ep)-3\ep}).
	\end{align*}
	Here we use the shorthand notation $ E_i(\xi), i=1,2,3,$ for convenience if it does not cause any confusion. Set
	\begin{equation*}
		I_{\la,\ep}:=[-c_0\la^{-(1-\ep)},-c_0\la^{-(1-\ep)}/2],
	\end{equation*}
where $c_0$ is a  small positive constant to be chosen later. We claim that for any $ x\in  I_{\la,\ep}$, there exists a $ n(x,\la)\in \N $ such that 
	\begin{equation}
		|S^af_\la^\ep(x,t_{n(x,\la)})|\geq \frac{1}{4\pi} \label{lowerbound}.
	\end{equation}
	Note that  by Fourier inverse transform, we have $ f(x)=e(\la^{1+\ep}x)\check{\phi}(\la x) $. Since $ \check{\phi} $ is a Schwartz function, we have $ |f(x)|\leq C_N\la^{-\ep N} $ for any $ x\in  I_{\la,\ep}$. Then, by triangle inequality,
	\begin{equation}
		|S^af_\la^\ep(x,t_{n(x,\la)})-f(x)|\geq \frac{1}{8\pi},\label{difference}
	\end{equation} 
	when $ \la  $ is sufficiently large.  Since the first two terms in (\ref{phase function}) are  independent of $ \xi $,  we obtain that
	\begin{align*}
		|S^af_\la^\ep(x,t_{n(x,\la)})|=&\frac{1}{2\pi}\Big|\int_{|\xi|\leq 1}e(E_1(\xi)+E_2(\xi)+E_3(\xi))\phi(\xi)d\xi\Big|\\
		                  \geq &\frac{1}{2\pi}\Big|\int \phi(\xi)d\xi\Big|-\frac{1}{2\pi}\Big|\int_{|\xi|\leq 1}(e(E_1(\xi)+E_2(\xi)+E_3(\xi))-1)\phi(\xi)d\xi\Big|\\
		                  \geq &\frac{1}{2\pi}-\frac{1}{2\pi}\max_{|\xi|\leq 1}|e(E_1(\xi)+E_2(\xi)+E_3(\xi))-1|.
	\end{align*}
	Hence, to get (\ref{lowerbound}), it suffices to show that the absolute value of phase functions $ E_i(\xi), i=1,2,3,$ is small for our choices of $ x $ and $ n(x,\la) $. For any $ x\in  I_{\la,\ep}$, we can find a unique  $ n(x,\la)\in \N $ such that 
	$$x\in (-at_{n(x,\la)}\la^{(a-1)(1+\ep)},-at_{n(x,\la)+1}\la^{(a-1)(1+\ep)}].$$
	Since $ |x| $ is small and $ a>1 $,  some simple computation yields that
	\begin{equation}
		c|x|\leq t_{n(x,\la)}\la^{(a-1)(1+\ep)}\leq C|x|\label{bound for t}.
	\end{equation} 
	Moreover, recall that $ t_n=n^{-\ga} $, we easily obtain that
	\begin{equation*}
		t_{n(x,\la)}-t_{n(x,\la)+1}\leq Ct_{n(x,\la)}^{\b},\quad \b=\frac{\ga+1}{\ga}=\frac{2a-1}{2(a-1)}.
	\end{equation*}
	 Then for any $ |\xi|\leq 1 $ and $ x\in  I_{\la,\ep} $
	\begin{align*}
		|E_1(\xi;x,t_{n(x,\la)},\la,\ep)|\leq&a\la^{a(1+\ep)-\ep}(t_{n(x,\la)}-t_{n(x,\la)+1})\\
		                      \leq&C(\la^{a(1+\ep)-\ep}t_{n(x,\la)})t_{n(x,\la)}^{1/\ga}\\
		                      \leq&C\la |x|t_{n(x,\la)}^{1/\ga}\\
		                      \leq& c_0 C\la^\ep t_{n(x,\la)}^{1/\ga}\\
		                      \leq&c_0 C \la^{\ep-(a-1)\frac{1+\ep}{\ga}-\frac{1-\ep}{\ga}}.
		\end{align*}
		Note that $ \ep-(a-1)\frac{1+\ep}{\ga}-\frac{1-\ep}{\ga}\leq-\frac{1-\ep}{2}\leq 0 $ since $ \ep\leq 1 $ and $ \ga=2(a-1) $. Hence $ E_1(\xi)$ is sufficiently small provided that $ c_0$ is small enough. Now we give the upper bound for $E_2(\xi) $. Using (\ref{bound for t}), we have
		\begin{align*}
			|E_2(\xi;t_{n(x,\la)},\la,\ep)|\leq &Ct_{n(x,\la)}\la^{a(1+\ep)-2\ep}\\
			                       \leq &C|x|\la^{1-\ep}\leq c_0C.
		\end{align*}
		Similarly, we also obtain
		 $$E_3(\xi;t_{n(x,\la)},\la,\ep)=O(\la^{-\ep}).$$
		Combining all the estimates for $ |E_i(\xi)| $ yields the desired lower bound (\ref{lowerbound}).
		\par Notice that (\ref{bound for t}) implies
		\begin{equation*}
			|t_{n(x,\la)}|\sim \la^{-a(1+\ep)+2\ep}
		\end{equation*}
		for any $ x\in  I_{\la,\ep}$. Then it follows from (\ref{difference}) that
		$$|\rate f_\la^\ep(x,t_{n(x,\la)})|\gtrsim \la^{[a(1+\ep)-2\ep]\de}, \quad \forall x\in I_{\la,\ep}.$$
	Combining the above estimate with (\ref{Hs-norm}), we have
		$$\la^{\frac{\ep-1}{2}}\lesssim\la^{-[a(1+\ep)-2\ep]\de} \la^{(1+\ep)s-\frac{1}{2}},$$
		 according to Proposition \ref{maximal principle}. This clearly implies 
		 \begin{equation}
		 	[a(1+\ep)-2\ep]\de \leq (1+\ep)s-\ep/2,\label{relation}
		 \end{equation}
		 for any $ 0<\ep\leq 1. $ Let $ \ep=1 $ and $\ep\to 0+  $ in (\ref{relation}) respectively, we get 
		 $$ (a-1)\de+\max\{\de,\frac{1}{4}\}\leq s,$$
		 which completes the proof.
\end{proof}

\section{proof of Theorem \ref{thm4} and Theorem \ref{thm5}}\label{remaining}
Now we turn to the case $ 0<a<1 $, whose proof is analogous to that of Theorem \ref{thm3}. We first prove the sufficient condition. As we did in the proof of Proposition \ref{2.1},  it suffices to show the corresponding frequency localized estimate.
\begin{prop}
	If $0< a<1 $ and $ 0\leq \de<1 $, then for $ k\geq 1 $ and $ s=\max\{\de,\frac{1}{4}\}\cdot a $, we have
	\begin{equation*}
	\Big\|\sup_{0<t<1}|R_\de^af(\cdot,t)|\Big\|_{L^2(B)}\leq Ck2^{ks} \norm{f}_{L^2(\R)},
	\end{equation*} 
whenever $\supp \hat{f}\subset A(2^k) $.  Moreover, the factor $  k $ can be removed when $ \de\neq \frac{1}{4}$.
\end{prop}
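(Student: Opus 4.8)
The plan is to follow the three-step scheme of Proposition~\ref{freqlocalized}, but the time decomposition is now genuinely simpler: since $0<a<1$ we have $2^{-ak}<1<2^{(1-a)k}$, so the ``large time'' piece $I_3$ from the case $a>1$ disappears and only two regions survive. Thus, for $\supp\hat f\subset A(2^k)$ I would write $\sup_{0<t<1}|R_\de^a f(x,t)|\leq I_1(x)+I_2(x)$ with
\[
 I_1(x)=\sup_{0<t\leq 2^{-ak}}|R_\de^a f(x,t)|,\qquad I_2(x)=\sup_{2^{-ak}<t<1}|R_\de^a f(x,t)|.
\]

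For $I_1$ the argument is identical to the case $a>1$: Taylor-expanding $e^{it|\xi|^a}-1$ and using $\de<1$ gives the pointwise bound $I_1(x)\leq\sum_{j\geq 1}\frac{2^{-ak(j-\de)}}{j!}|(-\De)^{ja/2}f(x)|$, and Plancherel together with $|\xi|\sim 2^k$ on $\supp\hat f$ sums to $\|I_1\|_{L^2(\R)}\lesssim 2^{ak\de}\|f\|_{L^2(\R)}$. Since $a\de\leq s=\max\{\de,\tfrac14\}a$, this is $\lesssim 2^{ks}\|f\|_{L^2(\R)}$.

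The main point is $I_2$. I would dyadically decompose $(2^{-ak},1)$ into $J_l=[2^{-l},2^{-l+1}]$ for $1\leq l\leq ak+1$ and, using $\ell^1\hookrightarrow\ell^\infty$ and $t^{-\de}\sim 2^{l\de}$ on $J_l$, bound
\[
 I_2(x)\leq C\sum_{1\leq l\leq ak+1}2^{l\de}\big(\sup_{t\in J_l}|S^a f(x,t)|+|f(x)|\big).
\]
The decisive observation is that here one \emph{cannot} use the sharp local estimate Lemma~\ref{sharp local lemma}, whose exponent $\tfrac14$ is larger than the target exponent $\tfrac a4$; instead one applies the Dimou--Seeger bound Lemma~\ref{dimou} with $J=J_l$, giving $\|\sup_{t\in J_l}|S^a f|\|_{L^2(\R)}\lesssim(1+2^{-l/4}2^{ak/4})\|f\|_{L^2(\R)}\lesssim 2^{(ak-l)/4}\|f\|_{L^2(\R)}$ for $l\lesssim ak$. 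Summing in $l$,
\[
 \|I_2\|_{L^2(\R)}\lesssim 2^{ak/4}\sum_{1\leq l\leq ak+1}2^{l(\de-\frac14)}\|f\|_{L^2(\R)},
\]
where the geometric series contributes $O(1)$ if $\de<\tfrac14$, $O(2^{ak(\de-1/4)})$ if $\de>\tfrac14$, and $O(k)$ if $\de=\tfrac14$; the $|f(x)|$ term is dominated by the same sum and is $\lesssim 2^{ak\de}\|f\|_{L^2(\R)}$. In all three cases this is $\lesssim k2^{ks}\|f\|_{L^2(\R)}$, with the factor $k$ needed only when $\de=\tfrac14$, because $s=\tfrac a4$ when $\de\leq\tfrac14$ and $s=a\de$ when $\de\geq\tfrac14$. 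Combining the estimates for $I_1$ and $I_2$ completes the proof.

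The main obstacle is conceptual rather than technical: one has to notice that for $0<a<1$ the borderline time is $t\sim 2^{-ak}$ (not $t\sim 2^{(1-a)k}$), so the short-interval gain in Lemma~\ref{dimou} must carry the entire large-time range, and that replacing it by the seemingly stronger Lemma~\ref{sharp local lemma} would overshoot the sharp exponent. Once this is recognized, the remaining work is the routine bookkeeping of the three regimes $\de\lessgtr\tfrac14$ already illustrated in the case $a>1$.
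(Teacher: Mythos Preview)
Your proof is correct and follows essentially the same route as the paper's: both split at $t=2^{-ak}$, handle the small-time piece by the Taylor-expansion argument of Proposition~\ref{freqlocalized}, and treat the remaining range $(2^{-ak},1)$ by dyadic decomposition into $J_l$ together with Lemma~\ref{dimou}, summing the geometric series in $l$ according to the sign of $\de-\tfrac14$. Your additional remarks on why the $I_3$ term vanishes and why Lemma~\ref{sharp local lemma} would overshoot are accurate but are not part of the paper's write-up.
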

\begin{proof}
	 We can also write
	\begin{equation*}
	\sup_{0<t<1}|R^a_\de f(x,t)|\leq \ti{I_1}(x)+\ti{I_2}(x),
	\end{equation*}
	where
	\begin{align*}
	\ti{I_1}(x)&=\sup_{0<t\leq 2^{-ak}}|R^a_\de f(x,t)|,\\
	\ti{I_2}(x)&=\sup_{2^{-ak}<t<1}|R^a_\de f(x,t)|.
	\end{align*}
	Proceeding as in the estimate for $ I_1(x) $ in Proposition \ref{freqlocalized},  we obtain 
	$$\norm{I_1}_{L^2(\R)}\leq C2^{ka\de}\norm{f}_{L^2(\R)}.$$
	For the second term $ I_2(x) $,  we use dyadic decomposition towards interval $ (2^{-ak},1) $ so that 
	$$ I_2(x)\leq \sup_{\substack{l\in\N,\\1\leq l\leq ak+1}}\sup_{t\in J_l}|R_\de^a f(x,t)|,$$
	where $ J_l=[2^{-l},2^{-l+1}] $. Again using Lemma \ref{dimou} with $ J=J_l $, we have
	\begin{align*}
	\norm{I_2}_{L^2(\R)}&\lesssim\sum_{1\leq l\leq ak+1}2^{l\de}\Big( \Big\| \sup_{t\in J_l}|S^a f(.,t)| \Big\|_{L^2(\R)}+\norm{f}_{L^2(\R)}\Big)\\
	&\lesssim \sum_{1\leq l\leq ak+1}2^{l\de}2^{-\frac{l}{4}}2^{\frac{ka}{4}}\norm{f}_{L^2(\R)}\\
	&\lesssim \begin{cases}
	2^{ks}\norm{f}_{L^2(\R)},&\text{if} \quad\de\neq \frac{1}{4},\\
	k2^{ks}\norm{f}_{L^2(\R)},&\text{if} \quad\de=\frac{1}{4}.
	\end{cases}
	\end{align*}
	This completes the proof.
\end{proof}
Now we consider the necessary condition for the case $ 0<a<1 $. The counterexample here is  quite similar to that in Theorem \ref{thm3}, except for a slight modification  of the time sequence.
\begin{proof}[Proof of negative results in Theorem \ref{thm4}]
	We use the same constructions $ \phi $ and $ f_\la^\ep $ utilized in the preceding section. Then $ \norm{f_\la^\ep}_{H^s(\R)}\sim \la^{(1+\ep)s}\la^{-\frac{1}{2}}$. 
	\par Take $ t_n=n^{-\ga} $. Here $ \ep$ and $ \ga $ satisfy  $ \ep(\ga+2-a)<a $. Arguing as before, we get that, for all $ x\in I_{\la,\ep}:=[-c_0\la^{-(1-\ep)},-c_0\la^{-(1-\ep)}/2] $, there exists a unique $ n(x,\la) $ such that
	$$x\in (-at_{n(x,\la)}\la^{(a-1)(1+\ep)},-at_{n(x,\la)+1}\la^{(a-1)(1+\ep)}].$$
  Moreover, we obtain 
\begin{equation*}
t_{n(x,\la)}\sim |x|\la^{(1-a)(1+\ep)}	\sim\la^{\ep(2-a)-a}\ll 1,
\end{equation*}
and
$$	t_{n(x,\la)}-t_{n(x,\la)+1}\leq Ct_{n(x,\la)}^{\frac{\ga+1}{\ga}}.$$
Using the identical notations as before, we have
$$|S^af_\la^\ep(x,t_{n(x,\la)})|\geq \frac{1}{2\pi}-\frac{1}{2\pi}\max_{|\xi|\leq 1}|e(E_1(\xi)+E_2(\xi)+E_3(\xi))-1|.$$
It is clear that
$$E_3(\xi;t_{n(x,\la)},\la,\ep)=O(\la^{-\ep}).$$
By a  simple computation, we get 
$$|E_1(\xi;x,t_{n(x,\la)},\la,\ep)|\leq C\la^{a\frac{\ep(\ga+2-a)-a}{\ga}}\ll 1,$$
and 
$$|E_2(\xi;t_{n(x,\la)},\la,\ep)|\leq Ct_{n(x,\la)}\la^{a(1+\ep)-2\ep}\leq c_0 C.$$
Proceeding as in the proof for the case $a>1 $, we have 
\begin{equation*}
	[a(1+\ep)-2\ep]\de \leq (1+\ep)s-\ep/2,
\end{equation*}
for $ 0<\ep<\frac{a}{\ga+2-a} $. Then we get $ a\de\leq s $ if $ \ep\to 0 $. Furthermore, let $ \ep\to\frac{a}{\ga+2-a} $ and $ \ga\to 0+$,  we obtain another necessary condition $ s\geq \frac{a}{4} $ .
\end{proof}

Now we turn to the case $ a=1 $. The positive result of Theorem \ref{thm5} follows from Theorem \ref{thm1Cowling}. It is easy to see that (\ref{convergence rate}) for the case $ a=1 $  implies
	\begin{equation}
		\lim_{t\to 0}S^1f(x,t)=f(x),\quad  a.e.\quad x\in\R^n.\label{pointwsie convergence}
	\end{equation}
So the failure of almost everywhere pointwise convergence for $ S^1f(x,t) $ yields the failure of (\ref{convergence rate}) when $ a=1 $. The counterexample for (\ref{pointwsie convergence}) when $ f\in H^{\frac{1}{2}}(\R^n) $ was established by \cite[Theorem 14.2]{1999Walther}, \cite[Lemma A.2]{ham2022dimension} and \cite[Section 3.1]{2023WZ}. Thus, it suffices to show the remaining necessary condition $ s\leq \de $. We point out that the following construction of radial function can be used to generalize the negative results of Theorem \ref{thm3} and Theorem \ref{thm4} to all dimensions $ n\geq 1 $.
\begin{proof}
	We only consider the case $ n\geq 2 $ since the case  $ n=1 $ is much easier. Choose the same smooth function $ \phi $ as above. Then define the Schwartz function $ g^\ep_\la $ with $ 0<\ep<\frac{1}{2} $ and $ \la $  sufficiently large by its Fourier transform
	$$\widehat{g_\la^\ep}(\xi)=\la^{-\frac{n+1}{2}}\phi\Big(\frac{|\xi|-\la^{1+\ep}}{\la}\Big).$$
	By Plancherel's theorem, it is easy to see  
	\begin{equation}
		\norm{g_\la^\ep}_{H^s(\R^n)}\sim \la^{(1+\ep)s}\la^{\ep\frac{n-1}{2}-\frac{1}{2}}.\label{n dimension norm}
	\end{equation}
	Now we take the time sequence to be $ t_n=n^{-2} $. It remains to show that for all $ x\in A(\la^{-1+\ep}) $, there exists a $ n(x)\in \N $ such that 
	\begin{equation}
		|R_\de^1g^\ep_\la(x,t_{n(x)})|\geq C|x|^{-(\frac{n-1}{2}+\de)}\la^{\ep\frac{n-1}{2}}.\label{goal}
	\end{equation}
	If (\ref{goal}) holds, it follows from Proposition \ref{maximal principle} and (\ref{n dimension norm}) that 
	\begin{equation*}
		\la^{-(1-\ep)(\frac{1}{2}-\de)}\la^{\ep\frac{n-1}{2}}\lesssim \la^{\ep\frac{n-1}{2}-\frac{1}{2}}\la^{(1+\ep)s}.
	\end{equation*}
	By a simple computation, we have 
	\begin{equation*}
	(1-\ep)\de\leq (1+\ep)s-\frac{\ep}{2}.
	\end{equation*}
	Let $ \ep\to 0+ $, we get the desired upper bound $ \de\leq s $.
	\par To get (\ref{goal}), we recall the well known asymptotic expansion (see, e.g., \cite[p. 580]{grafakos2008classical})
	\begin{equation*}
		\int_{\mathbb{S}^{d-1}}e^{ix\cdot \theta}d\theta=c_{\pm}|x|^{-\frac{n-1}{2}}e^{\pm i|x|}+O(|x|^{-\frac{n+1}{2}}) , \quad  \text{as } |x|\to\infty.
	\end{equation*}
	Then 
	\begin{align*}
		g_\la^\ep(x)=&\frac{1}{(2\pi)^n}\la^{-\frac{n+1}{2}}\int_{\R^n}e(x\cdot\xi)\phi\Big(\frac{|\xi|-\la^{1+\ep}}{\la}\Big)d\xi\\
		            =&\frac{1}{(2\pi)^n}\la^{-\frac{n+1}{2}}\int_{0}^{+\infty}\phi\Big(\frac{r-\la^{1+\ep}}{\la}\Big)\Big(\int_{\mathbb{S}^{d-1}}e(x\cdot r\theta)d\theta\Big)r^{n-1}dr\\
		            =&\frac{c_{\pm}}{(2\pi)^n}\la^{-\frac{n+1}{2}}|x|^{-\frac{n-1}{2}}\int_{0}^{+\infty}r^{\frac{n-1}{2}}\phi\Big(\frac{r-\la^{1+\ep}}{\la}\Big)e(\pm|x|r)dr\\
		            &+\frac{1}{(2\pi)^n}\la^{-\frac{n+1}{2}}\int_{0}^{+\infty}r^{n-1}\phi\Big(\frac{r-\la^{1+\ep}}{\la}\Big)O((r|x|)^{-\frac{n+1}{2}})dr\\
		            =&\frac{c_{\pm}}{(2\pi)^n}|x|^{-\frac{n-1}{2}}e(\pm |x|\la^{1+\ep})\int_{-1}^{1}(r+\la^\ep)^{\frac{n-1}{2}}\phi(r)e(\pm\la|x|r)dr\\
		            &+\frac{1}{(2\pi)^n}\la^{\frac{n-1}{2}}\int_{-1}^{1}(r+\la^\ep)^{\frac{n-1}{2}}\phi(r)O((\la^{1+\ep}|x|)^{-\frac{n+1}{2}})dr\\
		            =&I_{+}+I_{-}+\la^{\ep\frac{n-1}{2}}|x|^{-\frac{n-1}{2}}O((\la^{1+\ep}|x|)^{-1}).
	\end{align*}
Integration by part yields that 
\begin{equation*}
	I_{\pm}=\la^{\ep\frac{n-1}{2}}|x|^{-\frac{n-1}{2}}O((\la|x|)^{-1}).
\end{equation*}
Thus, we obtain, as long as $ x\in A(\la^{-1+\ep}) $, that 
\begin{equation}
	|g^\ep_\la(x)|\ll \la^{\ep\frac{n-1}{2}}|x|^{-\frac{n-1}{2}} \label{bound for g}.
\end{equation}
Arguing as in the previous section, there exists a unique $ n(x)\in \N $ such that 
\begin{equation*}
	|x|\in [t_{n(x)},t_{n(x)+1})
\end{equation*}
for all $ x\in A(\la^{-1+\ep}) $. Moreover, we have 
\begin{equation}
	t_{n(x)}\sim |x|,\label{bound for tn in section4}
\end{equation}
and 
\begin{equation}
	t_{n(x)}-t_{n(x)+1}\leq C|x|^2.\label{error for t}
\end{equation}
Proceeding as in the estimate for $g_\la^\ep(x)$, we can write 
\begin{equation}
	S_\de^1g^\ep_\la(x,t)=\tilde{I}_{+}+\tilde{I}_{-}+\la^{\ep\frac{n-1}{2}}|x|^{-\frac{n-1}{2}}O((\la^{1+\ep}|x|)^{-1}),\label{expression for S1}
\end{equation}
where 
$$\tilde{I}_{\pm}=\frac{c_{\pm}}{(2\pi)^n}|x|^{-\frac{n-1}{2}}e((t\pm |x|)\la^{1+\ep})\int_{-1}^{1}(r+\la^\ep)^{\frac{n-1}{2}}\phi(r)e(\la(t\pm|x|)r)dr.$$
It follows from integration by part that 
\begin{equation}
	|\tilde{I}_{+}| \leq C\la^{\ep\frac{n-1}{2}}|x|^{-\frac{n-1}{2}}(\la|x|)^{-1}.\label{esitimate for I+}
\end{equation}
Now we estimate the main term $ \tilde{I}_{-} $. It is clear that 
\begin{equation*}
	|\la(t_{n(x)}-|x|)r|\leq \la(t_{n(x)}-t_{n(x)+1}) \lesssim  \la |x|^2 \lesssim\la^{-1+2\ep}\ll 1,        	                                                    
\end{equation*}
 for all $ r\in \supp\phi $ and $ x\in A(\la^{-1+\ep}) $. Applying triangle inequality as before, we have 
 \begin{equation}
 	|\tilde{I}_{-}|\gtrsim \la^{\ep\frac{n-1}{2}}|x|^{-\frac{n-1}{2}}.\label{estimate for I-}
 \end{equation}
 It is concluded from (\ref{expression for S1}), (\ref{esitimate for I+}) and (\ref{estimate for I-}) that
 \begin{equation}
 	|S_\de^1g^\ep_\la(x,t_{n(x)})|\gtrsim  \la^{\ep\frac{n-1}{2}}|x|^{-\frac{n-1}{2}}\label{lowerbound for S1}.
 \end{equation}
Finally, we prove (\ref{goal}) using (\ref{bound for tn in section4}), (\ref{bound for g}) and (\ref{lowerbound for S1}), which completes the proof.
\end{proof}

\begin{center}
ACKNOWLEDGMENTS
\end{center}

The first author is supported in part by NSFC12371100 and NSFC12171424. The authors would like to thank Professor Dashan Fan for his helpful discussion.
\vskip10pt


\end{document}